\numberwithin{equation}{section}
\newtheorem{theorem}{Theorem}[section]
\newtheorem{question}[theorem]{Question}
\newtheorem{lemma}[theorem]{Lemma}
\newtheorem{corollary}[theorem]{Corollary}
\theoremstyle{definition}
\newtheorem{definition}[theorem]{Definition}
\newtheorem{remark}[theorem]{Remark}
\newcommand\Supp{\operatorname{Supp}}
\newcommand\As{\operatorname{As}}
\newcommand{\Ass}[0]{\operatorname{Ass}}
\newcommand\Tor{\operatorname{Tor}}
\newcommand\Hom{\operatorname{Hom}}
\newcommand\Ext{\operatorname{Ext}}
\newcommand\Rad{\operatorname{Rad}}
\newcommand\Ker{\operatorname{Ker}}
\newcommand\depth{\operatorname{depth}}
\newcommand\grade{\operatorname{grade}}
\newcommand\Spec{\operatorname{Spec}}
\newcommand{\gam}{\Gamma_{I}}
\newcommand{\Rgam}{{\rm R} \Gamma_{\mathfrak m}}
\title{Notes on local cohomology and duality}
\author{Michael Hellus and Peter Schenzel}
\address{Fakult\"at f\"ur Mathematik, Universit\"at Regensburg, D --- 93040 Regensburg, Germany}
\email{michael.hellus@mathematik.uni-regensburg.de}
\address{Martin-Luther-Universit\"at Halle-Wittenberg,
Institut f\"ur Informatik, D --- 06 099 Halle (Saale),
Germany}
\email{peter.schenzel@informatik.uni-halle.de}
\subjclass[2000]{Primary:  13D45; Secondary:  14M10, 13C40}
\keywords{Local cohomology, complete intersections,
cohomological dimension}
\begin{document}

\begin{abstract}

We provide a formula (see Theorem \ref{MDE}) for the Matlis dual of the injective hull of $R/\mathfrak{p}$ where $\mathfrak p$ is a one dimensional prime ideal in a local complete Gorenstein domain $(R,\mathfrak{m})$. This  is related to results of Enochs and Xu (see \cite{E} and \cite{EX}). We prove a certain 'dual' version of the Hartshorne-Lichtenbaum vanishing (see Theorem \ref{HLVTL}). There is a
generalization of local duality to cohomologically complete intersection ideals $I$ in the sense that for $I=\mathfrak{m}$ we get back the classical Local Duality Theorem. We determine the exact class of modules to which a characterization of cohomologically complete intersection from \cite{hellus_schenzel} generalizes naturally (see Theorem \ref{th_gvococciwrtM}).

\end{abstract}
\maketitle
In this paper we prove a Matlis dual version of Hartshorne-Lichtenbaum Vanishing Theorem and generalize the Local Duality Theorem.    The latter generalization is done for ideals which are cohomologically complete intersections, a notion which was introduced and studied in \cite{hellus_schenzel}.  The generalization is such that local duality becomes the special case when the ideal $I$ is the maximal ideal $\mathfrak{m}$ of the given local ring ($R,\mathfrak{m}$). We often use formal local cohomology, a notion which was introduced and studied by the second author in \cite{S2}. Formal local cohomology is related to Matlis duals of local cohomology modules (see \cite[Sect.~7.1~and~7.2]{hellushabil} and Corollary \ref{zz}).

We start in Section 1 with the study of the Matlis duals of local cohomology modules $H^{n-1}_I(R)$, where $n=\dim R$. The latter is also the formal local cohomology module $\varprojlim H^1_{\mathfrak{m}}(R/I^{\alpha})$ provided $R$ is a  Gorenstein ring. We describe  this module as the cokernel of a certain canonical map. As a consequence we derive a formula (see Theorem \ref{MDE}) for the Matlis dual of $E_R(R/\mathfrak{p})$, where $\mathfrak{p}\in \Spec R$ is a $1$-dimensional prime ideal. In some sense this is related to results by Enochs and Xu (see \cite{E} and \cite{EX}).

In Section 3 we generalize the Local Duality (see Theorem \ref{xx}). The canonical module in the classical version is replaced by the dual of $H^c_I(R)$ where $I$ is a cohomologically complete intersection ideal of grade $c$ (the case $I=\mathfrak{m}$ specializes to the classical local duality). See also \cite[Theorem 6.4.1]{hellushabil}. It is a little bit surprising that the $d$-th formal local cohomology occurs as the duality module for the duality of cohomologically complete intersections in a Gorenstein ring (see Corollary \ref{zz}).

In Theorem \ref{th_gvococciwrtM} we generalize the main result \cite[Theorem 3.2]{hellus_schenzel}. This provides  a
characterization of the property of 'cohomologically complete intersection' given for ideals to  finitely generated modules.
Finally, in Section 5 we fill a gap in our proof of \cite[Lemma 1.2]{hellus_schenzel}. To this end we use a result on inverse limits
as it was shown by the second author (see \cite{S3}). Some of the results of Section 4 are obtained independently by W. Mahmood
(see \cite{M}).

\section{On formal local cohomology}
Let $(R,\mathfrak m)$ be a local ring, let $I \subset R$
be an ideal. In the following let $\hat R^I$ denote the $I$-adic completion
of $R.$ Let $0 = \cap_{i=1}^r \mathfrak q_i$ denote a minimal primary
decomposition of the zero ideal. Then we denote by $u(I)$ the intersections of those
$\mathfrak q_i, i = 1,\ldots,r,$ such that $\dim R/(\mathfrak p_i + I)  > 0,$ where
$\Rad \mathfrak{q}_i = \mathfrak{p}_i, i = 1,\ldots,r.$

For the definition and basic properties
of $\varprojlim H^i_{\mathfrak{m}}(R/I^{\alpha}),$ the so-called formal local cohomology,
we refer to \cite{S2}. We denote the functor of global transform by
$T(\cdot) = \varinjlim \Hom_R(\mathfrak m^{\alpha}, \cdot)$, in order to distinguish it from Matlis duality
\[
D(M)=\Hom_R(M,E_R(R/\mathfrak{m})),
\]
where $E_R(R/\mathfrak{m})$ is a fixed $R$-injective hull of $k:=R/\mathfrak{m}$.

\begin{lemma}

\label{lemma_vorber}Let $I \subset R$ denote an arbitrary ideal. Then there is a short exact
sequence
\[
0 \to \hat  R^I/u(I \hat R^I)  \to \varprojlim T(R/I^{\alpha}) \to \varprojlim H^1_{\mathfrak m}(R/I^{\alpha})
\to 0.
\]

\end{lemma}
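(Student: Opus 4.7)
Start from the four-term exact sequence
\[
0 \to H^0_{\mathfrak m}(M) \to M \to T(M) \to H^1_{\mathfrak m}(M) \to 0
\]
valid for any $R$-module $M$. Apply it to $M = R/I^\alpha$ and split it into the two short exact sequences
\[
0 \to H^0_{\mathfrak m}(R/I^\alpha) \to R/I^\alpha \to X_\alpha \to 0,
\]
\[
0 \to X_\alpha \to T(R/I^\alpha) \to H^1_{\mathfrak m}(R/I^\alpha) \to 0,
\]
where $X_\alpha := R/J_\alpha$ with $J_\alpha := (I^\alpha :_R \mathfrak m^\infty)$, so that $H^0_{\mathfrak m}(R/I^\alpha) = J_\alpha/I^\alpha$. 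The plan is to apply $\varprojlim_\alpha$ to both sequences, identify $\varprojlim H^0_{\mathfrak m}(R/I^\alpha)$ with $u(I\hat R^I)$ inside $\hat R^I$, and then substitute into the second sequence.

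The systems $\{R/I^\alpha\}$ and $\{X_\alpha\}$ have surjective transition maps, hence are Mittag--Leffler. Each $H^0_{\mathfrak m}(R/I^\alpha)$ has finite length, being a finitely generated $R$-submodule of $R/I^\alpha$ that is annihilated by some power of $\mathfrak m$, so $\{H^0_{\mathfrak m}(R/I^\alpha)\}$ is Mittag--Leffler as well. Consequently $\varprojlim^1$ vanishes on all three systems, and the inverse limits of the two short exact sequences above remain short exact:
\[
0 \to \varprojlim H^0_{\mathfrak m}(R/I^\alpha) \to \hat R^I \to \varprojlim X_\alpha \to 0,
\]
\[
0 \to \varprojlim X_\alpha \to \varprojlim T(R/I^\alpha) \to \varprojlim H^1_{\mathfrak m}(R/I^\alpha) \to 0.
\]
Given the identification $\varprojlim H^0_{\mathfrak m}(R/I^\alpha) = u(I\hat R^I)$ as submodules of $\hat R^I$, one reads off $\varprojlim X_\alpha = \hat R^I/u(I\hat R^I)$, and substituting into the second line yields the desired sequence.

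The crux is the identification. Replacing $R$ by $R^* := \hat R^I$ (still local Noetherian, since $IR^*$ lies in its Jacobson radical and $R^*/IR^* = R/I$ is local) does not change the inverse system, so it suffices to prove
\[
\bigcap_\alpha \bigl(I^\alpha R^* :_{R^*} (\mathfrak m R^*)^\infty\bigr) \;=\; u(IR^*)
\]
inside $R^*$. Fix a minimal primary decomposition $0 = \bigcap_j \hat{\mathfrak q}_j$ in $R^*$ with $\Rad \hat{\mathfrak q}_j = \hat{\mathfrak p}_j$. For each $\alpha$, $J_\alpha$ is the intersection of those primary components of $I^\alpha R^*$ whose radical is not $\mathfrak m R^*$, and the task is to track these across $\alpha$ and compare with the fixed $\hat{\mathfrak q}_j$. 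One inclusion follows by Krull's intersection theorem applied in each $R^*/\hat{\mathfrak q}_j$ with $\dim R^*/(\hat{\mathfrak p}_j + IR^*) > 0$, using that some element of $\mathfrak m R^*$ is then a nonzerodivisor modulo $\hat{\mathfrak p}_j$; the other uses that when $\dim R^*/(\hat{\mathfrak p}_j + IR^*) = 0$, a power of $\mathfrak m R^*$ already lies in $\hat{\mathfrak p}_j + IR^*$, so elements of $\hat{\mathfrak q}_j$ contribute nothing outside the $\mathfrak m R^*$-saturation. The main obstacle is precisely this primary-decomposition bookkeeping across varying $\alpha$; a cleaner route is to invoke the computation of the zeroth formal local cohomology $\varprojlim H^0_{\mathfrak m}(R/I^\alpha) = u(I\hat R^I)$ from \cite{S2}.
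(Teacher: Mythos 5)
Your proposal is correct and follows essentially the same route as the paper: split the four-term sequence for $R/I^\alpha$ into two short exact sequences, use the Mittag--Leffler condition to pass to inverse limits, and identify $\varprojlim H^0_{\mathfrak m}(R/I^\alpha)$ with $u(I\hat R^I)$ via \cite[Lemma 4.1]{S2}. The only difference is your sketched direct verification of that identification, which you rightly abandon in favor of the same citation the paper uses.
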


\begin{proof} For each $\alpha \in \mathbb N$ there is the following canonical
exact sequence
\[
0 \to H^0_{\mathfrak m}(R/I^{\alpha}) \to R/I^{\alpha} \to T(R/I^{\alpha}) \to H^1_{\mathfrak{m}}(R/I^{\alpha})
\to 0.
\]
It splits up into two short exact sequences
\begin{gather*}
0 \to H^0_{\mathfrak m}(R/I^{\alpha}) \to R/I^{\alpha} \to R/I^{\alpha} : \langle \mathfrak{m}\rangle \to 0  \text{ and }\\
0 \to  R/I^{\alpha} : \langle \mathfrak{m}\rangle \to  T(R/I^{\alpha}) \to H^1_{\mathfrak{m}}(R/I^{\alpha})
\to 0.
\end{gather*}
Now the inverse systems at the left side of both of the exact sequences satisfy the Mittag-Leffler
condition. That is, by passing to the inverse limits it provides two short exact sequences. Putting them
together there is an exact sequence
\[
0 \to \varprojlim H^0_{\mathfrak m}(R/I^{\alpha})  \to \hat{R}^I \to
\varprojlim T(R/I^{\alpha}) \to \varprojlim H^1_{\mathfrak m}(R/I^{\alpha})
\to 0.
\]
Now it follows that $\varprojlim H^0_{\mathfrak m}(R/I^{\alpha})  = u(I\hat{R}^I),$ see \cite[Lemma 4.1]{S2}.
This finally proves the statement.
\end{proof}

Of a particular interest in the above Corollary is the case where $I \subset R$ is an ideal such that
$\dim R/I = 1.$

\begin{corollary}

Suppose that $\dim R/I = 1.$ Then there is a short
exact sequence
\[
0 \to \hat R^I/u(I \hat R^I)  \to \oplus_{i =1}^s \widehat{R_{\mathfrak p_i}}
\to \varprojlim H^1_{\mathfrak m}(R/I^{\alpha}) \to 0,
\]
where $\mathfrak p_i, i = 1,\ldots,s,$ denote those prime ideals $\mathfrak p$ of $\Ass R/I$ such that $\dim R/\mathfrak p = 1.$

\end{corollary}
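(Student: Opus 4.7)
Given Lemma \ref{lemma_vorber}, it suffices to identify $\varprojlim T(R/I^\alpha)$ with $\bigoplus_{i=1}^s \widehat{R_{\mathfrak p_i}}$. My plan has three steps, of which only the first is substantive.

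First, I would establish the levelwise identification
\[
T(R/I^\alpha) \cong \bigoplus_{i=1}^s (R/I^\alpha)_{\mathfrak p_i}
\]
for every $\alpha \geq 1$. Since $\dim R/I = 1$, the non-maximal primes in $\Supp(R/I^\alpha) = V(I)$ are exactly $\mathfrak p_1, \ldots, \mathfrak p_s$, each of which is a closed point of $\Spec R \setminus \{\mathfrak m\}$. I would form the natural map $R/I^\alpha \to \bigoplus_i (R/I^\alpha)_{\mathfrak p_i}$, check that its kernel is $H^0_{\mathfrak m}(R/I^\alpha)$ by a prime-avoidance argument on annihilators, and check that its cokernel is supported only at $\mathfrak m$: the primes $\mathfrak p_j$ with $j \neq i$ are incomparable to $\mathfrak p_i$, so $((R/I^\alpha)_{\mathfrak p_j})_{\mathfrak p_i} = 0$ for $j \neq i$, and hence the natural map becomes an isomorphism after localizing at any $\mathfrak p_i$. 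By flat base change along $R \to R_{\mathfrak p_i}$ together with $\mathfrak m \not\subseteq \mathfrak p_i$, one has $H^j_{\mathfrak m}((R/I^\alpha)_{\mathfrak p_i}) = 0$ for all $j$, so the long exact sequence of local cohomology forces this cokernel to equal $H^1_{\mathfrak m}(R/I^\alpha)$. Comparing with the four-term exact sequence defining $T$ (displayed in the proof of Lemma \ref{lemma_vorber}) yields the claim.

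Second, because the direct sum is finite, it commutes with $\varprojlim$, so
\[
\varprojlim T(R/I^\alpha) \cong \bigoplus_{i=1}^s \varprojlim_\alpha R_{\mathfrak p_i}/I^\alpha R_{\mathfrak p_i}.
\]
Third, since $\mathfrak p_i$ is a minimal prime of $I$, the ideal $IR_{\mathfrak p_i}$ is $\mathfrak p_i R_{\mathfrak p_i}$-primary, so the $IR_{\mathfrak p_i}$-adic and $\mathfrak p_i R_{\mathfrak p_i}$-adic topologies on $R_{\mathfrak p_i}$ coincide and the inner limit is $\widehat{R_{\mathfrak p_i}}$. Inserting this identification into Lemma \ref{lemma_vorber} produces the short exact sequence of the corollary. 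The main obstacle is the levelwise identification of $T(R/I^\alpha)$; everything else is formal.
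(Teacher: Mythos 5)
Your proposal is correct, and it reaches the levelwise identification $T(R/I^{\alpha})\cong\oplus_{i=1}^s R_{\mathfrak p_i}/I^{\alpha}R_{\mathfrak p_i}$ by a different route than the paper. The paper chooses a parameter $x\in\mathfrak m$ of $R/I$, uses $T(R/I^{\alpha})\simeq R_x/I^{\alpha}R_x$, passes to the semilocal ring $R_S$ with $S=\cap_i(R\setminus\mathfrak p_i)$ (possible since $x\in S$), and then applies the Chinese Remainder Theorem together with $\Rad IR_{\mathfrak p_i}=\mathfrak p_iR_{\mathfrak p_i}$ before taking inverse limits; your argument avoids the choice of $x$ and the CRT step, instead analyzing the natural map $R/I^{\alpha}\to\oplus_i(R/I^{\alpha})_{\mathfrak p_i}$ cohomologically (kernel $=H^0_{\mathfrak m}$, cokernel $\mathfrak m$-torsion and, since $H^j_{\mathfrak m}$ of each $(R/I^{\alpha})_{\mathfrak p_i}$ vanishes, equal to $H^1_{\mathfrak m}(R/I^{\alpha})$). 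The final limit steps are identical in both arguments, and your step 3 is exactly the paper's remark that $IR_{\mathfrak p_i}$ is $\mathfrak p_iR_{\mathfrak p_i}$-primary. One point you should make explicit: having two four-term exact sequences with the same outer terms does not by itself identify the middle terms, so to "compare with the sequence defining $T$" you need a canonical map between the sequences. This is easy to supply -- since the kernel and cokernel of $R/I^{\alpha}\to T(R/I^{\alpha})$ are $\mathfrak m$-torsion, localizing at the $\mathfrak p_i$ gives a natural map $T(R/I^{\alpha})\to\oplus_i(R/I^{\alpha})_{\mathfrak p_i}$ compatible with the maps from $R/I^{\alpha}$, and the five lemma (equivalently, the universal property of the ideal transform) finishes; naturality of this map in $\alpha$ is also what guarantees that the identification is compatible with the transition maps, which you need before invoking Lemma \ref{lemma_vorber} and passing to the inverse limit. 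With that supplement your plan is complete; what the paper's route buys is that these comparison and compatibility issues are absorbed into standard isomorphisms, while yours buys independence from the choice of the parameter $x$.
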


\begin{proof} Since $\dim R/I = 1$ there is an element $x \in \mathfrak{m}$ that is a parameter
for $R/I^{\alpha}$ for all $\alpha \in \mathbb{N}.$ Then there is an isomorphism
$T(R/I^{\alpha}) \simeq R_x/I^{\alpha}R_x$ for all $\alpha \in \mathbb{N}.$

Now let $S = \cap_{i=1}^s (R \setminus \mathfrak{p}_i).$ Since $x \in S$ there is a natural isomorphism (by the local-global-principle)
\[
R_x/I^{\alpha}R_x \simeq R_S/I^{\alpha}R_S  \text{ for all } \alpha \in \mathbb{N}.
\]
Then $R_S$ is a semi local ring. The Chinese Remainder Theorem provides isomorphisms
\[
R_S/I^{\alpha}R_S \simeq \oplus_{i=1}^s R_{\mathfrak{p}_i}/I^{\alpha}R_{\mathfrak{p}_i}
\text{ for all } \alpha \in \mathbb{N}.
\]
Now $\Rad I R_{\mathfrak{p}_i}= \mathfrak{p_i}R_{\mathfrak{p}_i}, i = 1,\ldots,s.$ So by passing
to the inverse limit we get the isomorphism
\[
\varprojlim T(R/I^{\alpha}) \simeq \oplus_{i=1}^s \widehat{R_{\mathfrak{p}_i}}.
\]
Therefore \ref{lemma_vorber} finishes the proof of the statement.
\end{proof}

\begin{remark}

In the case that $R/I$ is one dimensional it follows that
\[
H^1_{\mathfrak m}(R/I^{\alpha}) \simeq H^1_x(R/I^{\alpha}) \simeq
H^1_x(R) \otimes R/I^{\alpha} \simeq (R_x/R) \otimes R/I^{\alpha}
\]
for all $\alpha \in \mathbb N,$ where $x \in \mathfrak{m}$ denotes a parameter
of $R/I.$ This finally implies that
\[
\varprojlim H^1_{\mathfrak m}(R/I^{\alpha}) \simeq \widehat{R_x/R}^I.
\]

\end{remark}

\begin{corollary}

\label{cor_DHnmE}Suppose that $I \subset R$ is a one dimensional ideal in a local Gorenstein
ring $(R,\mathfrak{m})$ with $n = \dim R.$  Then there is a short exact sequence
\[
0 \to \hat R^I/u(I \hat R^I)  \to \oplus_{i =1}^s \widehat{R_{\mathfrak p_i}}
\to \Hom_R(H^{n-1}_I(R), E) \to 0,
\]
where $\mathfrak p_i, i = 1,\ldots,s,$ denote those prime ideals $\mathfrak p$ of $\Ass R/I$ such that $\dim R/\mathfrak p = 1.$

\end{corollary}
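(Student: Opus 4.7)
The plan is to reduce the statement to the preceding corollary by identifying the third term of that short exact sequence with $\Hom_R(H^{n-1}_I(R),E)$. Since the previous corollary already supplies
\[
0 \to \hat R^I/u(I\hat R^I) \to \oplus_{i=1}^s \widehat{R_{\mathfrak p_i}} \to \varprojlim H^1_{\mathfrak m}(R/I^{\alpha}) \to 0,
\]
without any Gorenstein hypothesis, all that remains is to exhibit a natural isomorphism
\[
\varprojlim_{\alpha} H^1_{\mathfrak m}(R/I^{\alpha}) \simeq D(H^{n-1}_I(R))
\]
when $R$ is Gorenstein of dimension $n$, and this isomorphism is exactly the bridge between formal local cohomology and Matlis duals of ordinary local cohomology alluded to in the introduction.

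To produce the isomorphism, I would first apply the classical Local Duality Theorem in the Gorenstein ring $R$ (so that $R$ itself serves as a dualizing module) to the finitely generated module $R/I^{\alpha}$: for each $\alpha$,
\[
H^1_{\mathfrak m}(R/I^{\alpha}) \simeq D(\Ext^{n-1}_R(R/I^{\alpha},R)).
\]
Then I would pass to the inverse limit over $\alpha$. Since Matlis dual is the functor $\Hom_R(-,E)$ with $E$ injective, it converts direct limits to inverse limits, hence
\[
\varprojlim_{\alpha} H^1_{\mathfrak m}(R/I^{\alpha}) \simeq \varprojlim_{\alpha} D(\Ext^{n-1}_R(R/I^{\alpha},R)) \simeq D\bigl(\varinjlim_{\alpha} \Ext^{n-1}_R(R/I^{\alpha},R)\bigr).
\]
Using the standard identification $\varinjlim_{\alpha} \Ext^{n-1}_R(R/I^{\alpha},R) \simeq H^{n-1}_I(R)$, which comes from the very definition of local cohomology through $\Ext$'s along the powers of $I$, the right hand side becomes $D(H^{n-1}_I(R))$.

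Substituting this identification into the short exact sequence recalled above immediately yields the claimed sequence. No step is genuinely difficult here; the only point to watch is the compatibility of the transition maps, namely that the maps in the inverse system $\{H^1_{\mathfrak m}(R/I^{\alpha})\}$ induced by the surjections $R/I^{\alpha+1} \to R/I^{\alpha}$ correspond under local duality to the Matlis duals of the maps in the direct system $\{\Ext^{n-1}_R(R/I^{\alpha},R)\}$ whose colimit computes $H^{n-1}_I(R)$. This is built into the naturality statement of local duality, so the verification is routine once one has laid out the two systems side by side.
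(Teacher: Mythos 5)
Your proof is correct and is essentially the paper's own argument: the paper likewise deduces the sequence from the preceding corollary by identifying $\varprojlim H^1_{\mathfrak m}(R/I^{\alpha})$ with $\Hom_R(H^{n-1}_I(R),E)$ via local duality for the Gorenstein ring applied to the modules $R/I^{\alpha}$, using that $\Hom_R(-,E)$ turns the direct limit $\varinjlim \Ext^{n-1}_R(R/I^{\alpha},R)\cong H^{n-1}_I(R)$ into an inverse limit. Your extra remark about compatibility of the transition maps is exactly the naturality the paper implicitly invokes.
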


\begin{proof} This is clear because of $\varprojlim H^1_{\mathfrak m}(R/I^{\alpha})  \simeq
\Hom_R(H^{n-1}_I(R), E) $ as it is a consequence of the Local Duality Theorem for Gorenstein
rings (the Hom-functor in the first place transforms a direct limit into an inverse limit).
\end{proof}

In particular the Matlis dual of $H^{n-1}_I(R)$ is exactly the cokernel of the canonical map $\hat R^I  \to \oplus_{i =1}^s \widehat{R_{\mathfrak p_i}}$ This generalizes \cite[Lemma 3.2.1]{hellushabil} (see also \cite[Lemma 1.5]{hellus_stueckrad}).

If we assume in addition that $I=\mathfrak{p}$ is a one dimensional prime ideal and that $R$ is a complete domain, then by \cite[Theorem 3.2]{hellus_schenzel} the fact $H^n_\mathfrak{p}(R)=0$ (as follows by the Hartshorne-Lichtenbaum Vanishing Theorem) is equivalent to: The minimal injective resolution of $H^{n-1}_\mathfrak{p}(R)$ looks as follows:
\[ 0\to H^{n-1}_\mathfrak{p}(R)\to E_R(R/\mathfrak p)\to E_R(R/\mathfrak m)\to 0.\]
On the other hand we have (see Corollary \ref{cor_DHnmE}) a short exact sequence
\[ 0\to R\to \widehat{R_\mathfrak p}\to D(H^{n-1}_\mathfrak p(R))\to 0.\]
Note that the natural map $R\to \widehat{R_\mathfrak p}$ is injective because $R$ is a complete domain.
Therefore, $u(\mathfrak{p})=u(\mathfrak{p}\hat R^\mathfrak{p})= 0$. The comparison of the two exact sequences has
the following consequence:

Applying the functor $D$ to the first exact sequence it induces a natural homomorphism
\[
 R=D(E_R(R/\mathfrak m))\to D(E_R(R/\mathfrak p)).
\]
Since the latter is an $R_\mathfrak p$-module, we get a map
$R_\mathfrak p\to D(E_R(R/\mathfrak p))$ and therefore a family of homomorphisms
\[
R_\mathfrak p/\mathfrak{p}^{\alpha}R_{\mathfrak{p}} \to  D(E_R(R/\mathfrak p))/\mathfrak{p}^{\alpha}D(E_R(R/\mathfrak p))
\]
for any integer $\alpha \in \mathbb{N}$. But now we have the isomorphisms
\begin{gather*}
D(E_R(R/\mathfrak p))=\Hom_R(\varinjlim_{\alpha}\Hom_R(R/\mathfrak p^{\alpha},E_R(R/\mathfrak p)),E_R(R/\mathfrak m))= \\
=\varprojlim_{\alpha}D(E_R(R/\mathfrak p))/\mathfrak p^{\alpha}D(E_R(R/\mathfrak p)).
\end{gather*}
Therefore the above inverse systems induce a homomorphism $f: \widehat{R_\mathfrak p}\to D(E_R(R/\mathfrak p))$.
Clearly the natural homomorphism $R=D(E_R(R/\mathfrak m))\to D(E_R(R/\mathfrak p))$ factors through
$f$.  So the above two short exact sequences induce a commutative diagram
\[
\begin{xy}
\xymatrix{
0\ar[r]&R\ar[r]\ar@{=}[d]&\widehat{R_\mathfrak p}\ar[r]\ar[d]&D(H^{n-1}_\mathfrak p(R))\ar[d]\ar[r]&0\\
0\ar[r]&D(E_R(R/\mathfrak m))=R\ar[r]&D(E_R(R/\mathfrak p))\ar[r]&D(H^{n-1}_\mathfrak p(R))\ar[r]&0}
\end{xy}
\]
All maps in this commutative diagram are canonical and it is easy to see that the vertical homomorphism on the right is the identity. Therefore $f$ is an isomorphism too.
We conclude with the following result:

\begin{theorem}

\label{MDE}Let $\mathfrak p$ be a prime ideal of height $n-1$ in an $n$-dimensional local, complete Gorenstein domain $(R,\mathfrak m)$. Then the Matlis dual of $E_R(R/\mathfrak p)$ is $\widehat{R_\mathfrak p}$.

\end{theorem}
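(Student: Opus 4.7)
The plan is to compare two short exact sequences, both ending in $D(H^{n-1}_{\mathfrak p}(R))$, and produce a canonical vertical map between them that is forced to be an isomorphism by the five lemma. On one side I use Corollary \ref{cor_DHnmE}: since $\mathfrak p$ is a one-dimensional prime in the complete domain $R$, we have $\As R = \{0\}$, so $u(\mathfrak p \hat R^{\mathfrak p}) = 0$, and the list of one-dimensional associated primes of $R/\mathfrak p$ consists just of $\mathfrak p$ itself. Hence Corollary \ref{cor_DHnmE} specialises to the short exact sequence
\[
0 \to R \to \widehat{R_{\mathfrak p}} \to D(H^{n-1}_{\mathfrak p}(R)) \to 0.
\]

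On the other side I invoke the Hartshorne--Lichtenbaum Vanishing Theorem: since $R$ is a complete local domain, $H^n_{\mathfrak p}(R) = 0$. Combined with \cite[Theorem 3.2]{hellus_schenzel} this shows that the minimal injective resolution of $H^{n-1}_{\mathfrak p}(R)$ is simply $0 \to H^{n-1}_{\mathfrak p}(R) \to E_R(R/\mathfrak p) \to E_R(R/\mathfrak m) \to 0$. Applying $D(\cdot)$ and using $D(E_R(R/\mathfrak m)) = R$ (since $R$ is complete Gorenstein) yields a second short exact sequence
\[
0 \to R \to D(E_R(R/\mathfrak p)) \to D(H^{n-1}_{\mathfrak p}(R)) \to 0.
\]

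The key step is to build a natural map $f \colon \widehat{R_{\mathfrak p}} \to D(E_R(R/\mathfrak p))$ fitting between these two sequences. For this I first note that $D(E_R(R/\mathfrak p))$ is $\mathfrak p$-adically complete, which follows from rewriting Matlis duality through the identity $E_R(R/\mathfrak p) = \varinjlim_{\alpha} \Hom_R(R/\mathfrak p^{\alpha}, E_R(R/\mathfrak p))$ and commuting $\Hom$ past the direct limit to obtain $D(E_R(R/\mathfrak p)) \cong \varprojlim_{\alpha} D(E_R(R/\mathfrak p))/\mathfrak p^{\alpha} D(E_R(R/\mathfrak p))$. The canonical map $R = D(E_R(R/\mathfrak m)) \to D(E_R(R/\mathfrak p))$ obtained from dualising the surjection $E_R(R/\mathfrak p) \twoheadrightarrow E_R(R/\mathfrak m)$ factors through $R_{\mathfrak p}$ because the target is an $R_{\mathfrak p}$-module, and then extends to $\widehat{R_{\mathfrak p}}$ by the $\mathfrak p$-adic completeness just established. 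This is the step I expect to require the most care, as one must verify that the extension is well-defined and compatible with the canonical maps in both sequences.

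Once $f$ is in place, the diagram comparing the two short exact sequences commutes by naturality: the left vertical map is the identity on $R$, and the right vertical map $D(H^{n-1}_{\mathfrak p}(R)) \to D(H^{n-1}_{\mathfrak p}(R))$ is induced from the identity on $H^{n-1}_{\mathfrak p}(R)$ after tracing through the constructions, hence is the identity. The five lemma then forces $f$ to be an isomorphism, proving $D(E_R(R/\mathfrak p)) \cong \widehat{R_{\mathfrak p}}$.
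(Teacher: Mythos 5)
Your proposal is correct and follows essentially the same route as the paper: the same two short exact sequences (Corollary \ref{cor_DHnmE} with $u(\mathfrak p\hat R^{\mathfrak p})=0$, and the dual of the resolution $0\to H^{n-1}_{\mathfrak p}(R)\to E_R(R/\mathfrak p)\to E_R(R/\mathfrak m)\to 0$ coming from Hartshorne--Lichtenbaum via \cite[Theorem 3.2]{hellus_schenzel}), the same construction of $f\colon\widehat{R_{\mathfrak p}}\to D(E_R(R/\mathfrak p))$ through the $R_{\mathfrak p}$-module structure and the $\mathfrak p$-adic completeness of $D(E_R(R/\mathfrak p))$, and the same comparison-of-sequences argument forcing $f$ to be an isomorphism.
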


This is related to results from Enochs and Xu: $D(E_R(R/\mathfrak p))$ is flat and cotorsion by \cite[Lemma 2.3]{E} (see also \cite[Theorem 1.5]{I}), therefore (see \cite[Theorem]{E}), it is isomorphic to a direct product of modules $T_\mathfrak q$ (over $\mathfrak q\in \Spec R$) where each $T_\mathfrak q$ is the completion of a free module over $R_\mathfrak q$. It was also proved in \cite{E} that in this direct product the ranks of these free modules are uniquely determined. By \cite[Theorem 2.2]{EX} these ranks are
\[ \pi_0(\mathfrak q,D(E_R(R/\mathfrak p)))=\dim _{k(\mathfrak q)}k(\mathfrak q)\otimes _{R_\mathfrak q}\Hom_R(R_\mathfrak q,D(E_R(R/\mathfrak p)))\]
(all higher $\pi_i$ for $i>0$ vanish since $D(E_R(R/\mathfrak q))$ is flat: Its minimal flat resolution is trivial). For each $\mathfrak q$ different from $\mathfrak p$ the latter rank is zero: In case $\mathfrak p\neq \mathfrak m$ this follows from
\[ \Hom_R(R_\mathfrak q,D(E_R(R/\mathfrak p)))=D(R_\mathfrak q\otimes _RE_R(R/\mathfrak p))=0,\]
and in case $\mathfrak q=\mathfrak m$ we have
\[ (R/\mathfrak m)\otimes_RD(E_R(R/\mathfrak p))=D(\Hom_R(R/\mathfrak m,E_R(R/\mathfrak p)))=0.\]
Therefore, the use of those results from \cite{E}, \cite{EX} leads us to
\[ D(E_R(R/\mathfrak p))=T_\mathfrak p,\]
where $T_\mathfrak p$ is the completion of a free $R_\mathfrak p$-module.

Our Theorem \ref{MDE} gives the more precise information that the rank of this free module is exactly $1$, i.~e. $T_\mathfrak p\cong \widehat{R_\mathfrak p}$.

\begin{question}

Is it possible to deduce the fact that this rank is $1$ directly from \cite[Theorem 2.2]{EX}, i.~e., without using our theorem \ref{MDE}?

\end{question}

\section{A Remark on the Hartshorne-Lichtenbaum Vanishing Theorem}
\label{sect_rem_HLVT}

In this Section there is a comment on the Hartshorne-Lichtenbaum Vanishing Theorem in view to
the previous investigations. Let $I$ denote an ideal in a local Noetherian ring $(R,\mathfrak{m})$
with $\dim R = n.$ As above let  $\hat R^I$ denote the $I$-adic completion
of $R.$ Let $0 = \cap_{i=1}^r \mathfrak q_i$ denote a minimal primary
decomposition of the zero ideal. Then we denote by $v(I)$ the intersection of those
$\mathfrak q_i, i = 1,\ldots,r,$ such that $\dim R/(\mathfrak q_i + I)  > 0$ and $\dim R/\mathfrak{q}_i = n$.
Recall that $v(I) = u(I)$ (for the ideal $u(I)$ as introduced at the beginning of Section 1) if $R$ is
equi-dimensional.

The following result provides a variant of the Hartshorne Lichtenbaum Vanishing Theorem.

\begin{theorem} \label{HLVT} (\cite[Theorem 2.20]{S3})
Let $I \subset R$ denote an ideal and $n = \dim R$. Then
\[
H^n_I(R) \cong \Hom_R(v(I\hat{R}), E_R(R/\mathfrak{m})).
\]
That is $H^n_I(R)$ is an Artinian $R$-module and $H^n_I(R) = 0$ if and only if
$\dim \hat{R}/(I\hat{R} + \mathfrak{p} )> 0$ for all $\mathfrak{p} \in \Ass \hat{R}$ with
$\dim \hat{R}/\mathfrak{p} = n$.
\end{theorem}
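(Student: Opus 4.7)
Proof proposal:

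The plan is to split the argument into two parts: reduce to the case where $R$ is complete, then apply Matlis duality in tandem with the dualizing complex formalism to identify $D(H^n_I(R))$ with $v(I\hat R)$.

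For the reduction, I would first show that $\Supp H^n_I(R) \subseteq \{\mathfrak m\}$. Indeed, if $\mathfrak p \in \Spec R$ is distinct from $\mathfrak m$, then $\dim R/\mathfrak p \ge 1$, so $\dim R_\mathfrak p \le n-1 < n$, and by Grothendieck's vanishing theorem $(H^n_I(R))_\mathfrak p \cong H^n_{IR_\mathfrak p}(R_\mathfrak p) = 0$. Hence $H^n_I(R)$ is $\mathfrak m$-torsion and canonically an $\hat R$-module; by faithfully flat base change $H^n_I(R) \cong H^n_{I\hat R}(\hat R)$. Since $E_R(R/\mathfrak m) \cong E_{\hat R}(\hat R/\mathfrak m\hat R)$, the right-hand side of the asserted isomorphism also lives naturally over $\hat R$, so I may assume $R$ is complete.

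In the complete case, Matlis duality (a perfect contravariant equivalence between Noetherian and Artinian $R$-modules) reformulates the claim as the establishment of a natural isomorphism $D(H^n_I(R)) \cong v(I)$. Using the presentation $H^n_I(R) = \varinjlim_\alpha \Ext^n_R(R/I^\alpha, R)$ and the fact that $D$ carries direct limits to inverse limits, I obtain
\[
D(H^n_I(R)) \cong \varprojlim_\alpha D\bigl(\Ext^n_R(R/I^\alpha, R)\bigr).
\]
Each term on the right can then be analyzed through a dualizing complex of $R$, which exists since $R$ is complete local: Grothendieck local duality identifies $D(\Ext^n_R(R/I^\alpha, R))$ with a certain "top-dimensional" submodule of $R/I^\alpha$ coming from the associated primes of maximal dimension. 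In the Gorenstein (hence equidimensional) case this specializes cleanly, since $\Ext^n_R(R/I^\alpha, R) \cong D(H^0_\mathfrak m(R/I^\alpha))$, and thus $D(\Ext^n_R(R/I^\alpha,R)) \cong H^0_\mathfrak m(R/I^\alpha)$, whose inverse limit is $u(I\hat R) = v(I\hat R)$ by \cite[Lemma 4.1]{S2}. In the general case the restriction to components of dimension $n$ accounts for the appearance of $v$ in place of $u$.

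The main obstacle is carrying out the inverse limit computation precisely in the non-Gorenstein setting; this requires careful bookkeeping with the dualizing complex and its interaction with the primary components of $0$ in $\hat R$, and is the content of the proof of \cite[Theorem 2.20]{S3}. Granted the isomorphism, the Artinianness of $H^n_I(R)$ follows at once since $v(I\hat R)$ is a finitely generated $\hat R$-module. The vanishing criterion is then immediate: $H^n_I(R) = 0$ iff $v(I\hat R) = 0$, which by minimality of the primary decomposition is equivalent to the condition that for every $\mathfrak p \in \Ass \hat R$ with $\dim \hat R/\mathfrak p = n$ one has $\dim \hat R/(I\hat R + \mathfrak p) > 0$.
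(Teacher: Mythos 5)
You should first be aware that the paper does not prove this statement at all: it is quoted from \cite[Theorem 2.20]{S3}, so there is no internal argument to compare yours with. Judged as a stand-alone proof, your proposal has a structural gap and a mathematical one. Structurally, the entire general case is deferred to ``the content of the proof of \cite[Theorem 2.20]{S3}'' --- but that theorem \emph{is} the statement to be proved, so the central step is a citation of the result itself rather than an argument. Mathematically, the step you dismiss as bookkeeping is exactly where the proposed route breaks down. After reducing to $R$ complete, the identification $D(\Ext^n_R(R/I^\alpha,R))\cong H^0_{\mathfrak m}(R/I^\alpha)$ is local duality \emph{for a Gorenstein ring}; for general $R$ local duality pairs $H^i_{\mathfrak m}(-)$ with $\Ext$ into the dualizing complex (in the Cohen--Macaulay case into $K(\hat R)$), so your inverse limit computes the Matlis dual of $H^n_I$ of the canonical module, not of $R$. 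This cannot be repaired by more careful bookkeeping, because the displayed formula with the ideal $v(I\hat R)\subseteq\hat R$ cannot hold for arbitrary local rings: take $I=\mathfrak m$; then $v(\mathfrak m\hat R)=\hat R$ (an empty intersection) and $\Hom_R(\hat R,E_R(R/\mathfrak m))\cong E_R(R/\mathfrak m)$, whereas $H^n_{\mathfrak m}(R)\cong \Hom_R(K(\hat R),E_R(R/\mathfrak m))$, and the two agree only when $\hat R$ is Gorenstein. So your Gorenstein computation (which is precisely the mechanism of Section 1 of the paper, dualized) is correct, but the general assertion has to be formulated with a suitable submodule of $K(\hat R)$ in place of the ideal $v(I\hat R)$, or else proved under a Gorenstein-type hypothesis, which is the setting in which the paper actually uses these ideas.

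A smaller but still real problem sits in your final step. By irredundancy of a minimal primary decomposition, $v(I\hat R)=0$ forces \emph{every} component $\mathfrak q_i$ of $0$ in $\hat R$ to occur in the defining intersection, in particular forces $\dim\hat R/\mathfrak q_i=n$ for all $i$; hence ``$v(I\hat R)=0$'' is equivalent to the stated condition on the top-dimensional associated primes only when $\hat R$ is unmixed. For instance, for $\hat R=k[[x,y,z]]/\bigl((x)\cap(y,z)\bigr)$ and $I=(y)$ one has $H^2_I(R)=0$ (a principal ideal has cohomological dimension at most one) while $v(I\hat R)$ is the nonzero image of $(x)$. The reduction to the complete case, the use of Matlis duality, and the identification $\varprojlim H^0_{\mathfrak m}(R/I^\alpha)=u(I\hat R^I)$ from \cite[Lemma 4.1]{S2} are fine (modulo a word on the $I$-adic versus $\mathfrak m$-adic completion, and $u=v$ itself needs unmixedness), but as written your argument establishes the theorem only in the Gorenstein case and otherwise reduces to citing \cite{S3} --- which is all the paper itself does.
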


For an ideal $I$ of a Noetherian ring $R$ let $\As I$ denote the ultimate constant (see \cite{B}) value of
$\Ass R/I^{\alpha}$ for $\alpha \gg 0.$ We define the multiplicatively closed set $S = \cap_{\mathfrak{p} \in
\As I \setminus \{\mathfrak{m}\}} R \setminus \mathfrak{p}.$
Then there is an exact sequence
\[
0 \to I^{\alpha} :\langle\mathfrak{m} \rangle /I^{\alpha} \to R/I^{\alpha} \to R_S/I^{\alpha}R_S \; \text{ for all }
\alpha \gg 0.
\]
Since the modules on the left are of finite length the corresponding inverse system satisfies
the Mittag-Leffler condition. By passing to the inverse limit it induces an exact sequence
\[
0 \to u(I \hat R^I) \to \hat{R}^I \to \widehat{R_S}^I
\]
(see \cite[Lemma 4.1]{S2}).
Now let $R$ denote a complete equidimensional local ring. Then the natural homomorphism
$R \to \widehat{R_S}^I$ is injective if and only if $H^n_I(R) = 0.$ This follows since $v(I) = u(I) = 0$
if and only if $H^n_I(R) = 0$ under the additional assumption on $R.$

If in addition  $\dim R/I = 1$ we have as above that $R_S/I^{\alpha}R_S \simeq \oplus_{i=1}^s
R_{\mathfrak{p}_i}/I^{\alpha}R_{\mathfrak{p}_i}.$ Therefore, if $I$ is a one dimensional
ideal in an equidimensional complete local ring $(R,\mathfrak{m}).$  Then the natural homomorphism
$R \to  \oplus_{i =1}^s \widehat{R_{\mathfrak p_i}} $ is injective if and only if $H^n_I(R) = 0$ (the $\mathfrak{p}_i$ are defined as above). In case $R$ is in addition a domain then our map $R \to  \oplus_{i =1}^s \widehat{R_{\mathfrak p_i}} $ is clearly injective and, therefore, $H^n_I(R)=0$.

In the following we shall continue with this series of ideas in the case of $(R,\mathfrak{m})$
a Gorenstein ring.  To this end we put $V(I)_1 = \{\mathfrak{p} \in V(I) | \dim R/\mathfrak{p} = 1\}$.

\begin{theorem} \label{HLVTL} Let $(R,\mathfrak{m})$ denote a Gorenstein ring with $n = \dim R$. Let $I \subset R$
denote an ideal. Then $D(H^n_I(R))$ is isomorphic to the kernel of the natural
map $\hat{R} \to \prod_{\mathfrak{p} \in V(I)_1} \widehat{R_{\mathfrak{p}}}$. In particular, this homomorphism
is injective if and only if $H^n_I(R) = 0$.
\end{theorem}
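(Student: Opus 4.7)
The plan is to reduce the claim to Theorem~\ref{HLVT} via Matlis duality, and then to unwind the definition of $v(I\hat R)$ using primary decomposition in the Gorenstein complete ring $\hat R$. First, Theorem~\ref{HLVT} gives $H^n_I(R)\cong D(v(I\hat R))$; since $v(I\hat R)$ is an ideal of the Noetherian ring $\hat R$, hence a finitely generated $\hat R$-module, and $E_R(R/\mathfrak m)=E_{\hat R}(\hat R/\mathfrak m\hat R)$, a further application of Matlis duality (now over $\hat R$) will yield $D(H^n_I(R))\cong v(I\hat R)$ canonically as $\hat R$-modules. This reduces the theorem to identifying the ideal $v(I\hat R)\subseteq\hat R$ with the kernel of the natural map $\hat R\to\prod_{\mathfrak p\in V(I)_1}\widehat{R_{\mathfrak p}}$.

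Next, since $R$ is Gorenstein, so is $\hat R$, which is therefore equidimensional Cohen--Macaulay without embedded primes; in particular every $\tilde{\mathfrak p}\in\Ass\hat R=\Min\hat R$ satisfies $\dim\hat R/\tilde{\mathfrak p}=n$, so the second condition in the definition of $v$ becomes automatic. Consequently
\[
v(I\hat R)=\bigcap_{\tilde{\mathfrak p}\in M}Q(\tilde{\mathfrak p}),\qquad M:=\bigl\{\tilde{\mathfrak p}\in\Min\hat R\bigm|\dim\hat R/(\tilde{\mathfrak p}+I\hat R)\geq 1\bigr\},
\]
where $Q(\tilde{\mathfrak p})=\ker(\hat R\to\hat R_{\tilde{\mathfrak p}})$ is the $\tilde{\mathfrak p}$-primary component of $(0)$.

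Finally I will show this intersection equals $\ker(\hat R\to\prod_{\mathfrak p\in V(I)_1}\widehat{R_{\mathfrak p}})$. For the inclusion $\subseteq$, each $\tilde{\mathfrak p}\in M$ admits a prime $\tilde{\mathfrak q}\supseteq\tilde{\mathfrak p}+I\hat R$ of $\hat R$ with $\dim\hat R/\tilde{\mathfrak q}=1$; by faithful flatness of $R\to\hat R$ the contraction $\mathfrak q=\tilde{\mathfrak q}\cap R$ satisfies $I\subseteq\mathfrak q$ and $\dim R/\mathfrak q\geq 1$, and refining to a $1$-dimensional prime $\mathfrak p\supseteq\mathfrak q$ in the catenary Gorenstein ring $R$ yields $\mathfrak p\in V(I)_1$ through which the map $\hat R\to\widehat{R_{\mathfrak p}}$ factors via $\hat R_{\tilde{\mathfrak q}}$, forcing $Q(\tilde{\mathfrak p})\subseteq\ker(\hat R\to\widehat{R_{\mathfrak p}})$. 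For the reverse inclusion, a symmetric argument shows that every $\mathfrak p\in V(I)_1$ lies above some $\tilde{\mathfrak p}\in M$ with $\ker(\hat R\to\widehat{R_{\mathfrak p}})\subseteq Q(\tilde{\mathfrak p})$. The ``in particular'' clause is then immediate from faithfulness of Matlis duality. The main obstacle will be making the natural map $\hat R\to\widehat{R_{\mathfrak p}}$ rigorous when $R\neq\hat R$ (since $\widehat{R_{\mathfrak p}}$ is $\mathfrak p R_{\mathfrak p}$-adically complete, not $\mathfrak m$-adically) and carefully tracking the correspondence between primes of $M$ and those of $V(I)_1$ under contraction, taking the behaviour of the formal fibres of $R\to\hat R$ into account.
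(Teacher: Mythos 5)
Your route (Theorem \ref{HLVT} plus double Matlis duality over $\hat R$, then identifying $v(I\hat R)$ with the kernel via primary decomposition) is genuinely different from the paper's proof, which applies $\Gamma_I(\cdot)$ to the minimal injective resolution of the Gorenstein ring $R$, dualizes, and invokes Theorem \ref{MDE}. However, there are two genuine gaps. The first is the one you yourself postpone: the theorem is a statement about a specific natural map $\hat R\to\prod_{\mathfrak p\in V(I)_1}\widehat{R_{\mathfrak p}}$, and your argument never produces it. This cannot be deferred: any $x\in\mathfrak m\setminus\mathfrak p$ is a unit in $\widehat{R_{\mathfrak p}}$, so $R\to\widehat{R_{\mathfrak p}}$ is not $\mathfrak m$-adically continuous and does not extend to $\hat R$ by completion; the map exists precisely as the Matlis dual of the component $E_R(R/\mathfrak p)\to E_R(R/\mathfrak m)$ of the minimal injective resolution, transported through the identification $D(E_R(R/\mathfrak p))\cong\widehat{R_{\mathfrak p}}$ of Theorem \ref{MDE} --- exactly the ingredient your plan is designed to avoid. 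Consequently your claim that ``$\hat R\to\widehat{R_{\mathfrak p}}$ factors via $\hat R_{\tilde{\mathfrak q}}$'' has no defined object to apply to, and in the non-complete case the kernel computation is left hanging on the formal-fibre issues you mention but do not treat.

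The second gap lies in the identification itself, even in the complete case where the map really is $\hat R\to\hat R_{\mathfrak p}\to\widehat{R_{\mathfrak p}}$. Your inclusion ``$\subseteq$'' starts from $\tilde{\mathfrak p}\in M$, produces one $\mathfrak p\in V(I)_1$, and asserts $Q(\tilde{\mathfrak p})\subseteq\Ker(\hat R\to\widehat{R_{\mathfrak p}})$; this containment is false in general and the quantifiers are arranged the wrong way. Since $\hat R$ has no embedded primes, $\Ker(\hat R\to\hat R_{\tilde{\mathfrak q}})=\bigcap\{Q(\tilde{\mathfrak r}):\tilde{\mathfrak r}\in\Min\hat R,\ \tilde{\mathfrak r}\subseteq\tilde{\mathfrak q}\}$, so the factorization through $\hat R_{\tilde{\mathfrak q}}$ yields an inclusion \emph{into}, not \emph{from}, $Q(\tilde{\mathfrak p})$; concretely, for $R=k[[x,y,z]]/(xy)$, $I=\mathfrak p=(x,y)$, $\tilde{\mathfrak p}=(x)$ one has $Q(\tilde{\mathfrak p})=(x)$ while $\Ker(R\to\widehat{R_{\mathfrak p}})=0$. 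The correct arrangement is: for ``$\subseteq$'' fix $\mathfrak p\in V(I)_1$ and observe that every minimal prime of $\hat R$ contained in a prime lying over $\mathfrak p$ belongs to $M$, whence $v(I\hat R)\subseteq\Ker(\hat R\to\widehat{R_{\mathfrak p}})$; your scheme ``start from $\tilde{\mathfrak p}$, find one $\mathfrak p$'' is the right shape only for ``$\supseteq$'' (and your stated ``$\supseteq$'' has the quantifiers crossed in the same manner). A smaller, fixable point: the double-duality step needs $\Hom_{\hat R}(v(I\hat R),E)$, i.e.\ a reduction to the complete ring, since $\Hom_R(N,E)\neq\Hom_{\hat R}(N,E)$ for $\hat R$-modules $N$ that are not $\mathfrak m$-power torsion.
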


\begin{proof} By applying the section functor $\Gamma_I(\cdot)$ to the minimal injective resolution of the
Gorenstein ring $R$ it provides an exact sequence
\[
\oplus_{\mathfrak{p} \in V(I)_1} E_R(R/\mathfrak{p}) \to E_R(R/\mathfrak{m}) \to H^n_I(R) \to 0.
\]
Now apply the Matlis duality functor $D(\cdot)$ to the sequence. It provides the exact sequence
\[
0 \to D(H^n_I(R)) \to \hat{R} \to \prod_{\mathfrak{p} \in V(I)_1} D(E_R(R/\mathfrak{p})).
\]
By virtue of Theorem \ref{MDE} it follows that $D(E_R(R/\mathfrak{p})) \cong \widehat{R_{\mathfrak{p}}}$.
Therefore $D(H^n_I(R))$ is isomorphic to the kernel of the natural map
$\hat{R} \to \prod_{\mathfrak{p} \in V(I)_1} \widehat{R_{\mathfrak{p}}}$. Matlis duality provides the claim.
\end{proof}

If in addition $\dim R/I = 1$, then $V(I)_1$ is a finite set. Therefore the direct product in Theorem
\ref{HLVTL} is in fact a direct sum. Hence, the result in Theorem \ref{HLVTL} is a generalization
of the considerations above for the case $\dim R/I = 1$.

Moreover, in a certain sense Theorem \ref{HLVTL} is a dual version to \cite[Proposition]{CS} 
shown by Call and Sharp. 

\section{On a duality for cohomologically complete intersections}
\label{sect_on_dual_cci}As above let $(R,\mathfrak{m})$ denote a local Noetherian ring. An ideal $I \subset R$ is called
a cohomologically complete intersection whenever $H^i_I(R) = 0$ for all $i \not= c$ for some $c$ (see \cite{hellus_schenzel} for
the definition and a characterization). If $I$ is a cohomologically complete interssection, then
in the paper of Zargar and Zakeri (see \cite{ZZ}) the ring $R$ is called Cohen-Macaulay with respect
to $I$.

The main aim of the present section is to prove a generalized local duality for  a cohomologically
complete intersection $I$. A corresponding result was already obtained by W. Mahmood (see \cite{M}) resp. by the second author in \cite[Theorem 6.4.1]{hellushabil} by different means.

\begin{theorem} \label{xx} Let $I \subset R$ denote a cohomologically complete intersection with $c = \grade I$.
Let $X$ denote an arbitrary $R$-module. Then there are the following functorial isomorphisms
\begin{itemize}
	\item[(a)] $\Tor_{c-i}^R(X,H^c_I(R)) \cong H^i_I(X)$ and
	\item[(b)] $\Ext_R^{c-i}(X, \Hom_R(H^c_I(R),E_R(k))) \cong \Hom_R(H^i_I(X),E_R(k))$
\end{itemize}
for all $i \in \mathbb{Z}$.
\end{theorem}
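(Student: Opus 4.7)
The plan is to reduce everything to a single statement in the derived category and then take cohomology. The hypothesis that $I$ is a cohomologically complete intersection of grade $c$ says that $H^i_I(R)=0$ for $i\ne c$, which in the derived category means
\[
 R\Gamma_I(R)\simeq H^c_I(R)[-c].
\]
This is the essential input; everything else will be formal.

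For part (a), the key step is the well-known identification
\[
 R\Gamma_I(X)\simeq R\Gamma_I(R)\lo_R X
\]
for an arbitrary $R$-module $X$. One can prove this using the \v{C}ech complex $\Cech$ on a system of generators $\xx$ of $I$: the complex $\Cech$ is a bounded complex of flat $R$-modules representing $R\Gamma_I(R)$, and $\Cech\otimes_R X$ computes both sides. Combining with the displayed quasi-isomorphism for $R\Gamma_I(R)$ gives
\[
 R\Gamma_I(X)\simeq H^c_I(R)[-c]\lo_R X,
\]
and taking $i$-th cohomology yields $H^i_I(X)\cong \Tor^R_{c-i}(X,H^c_I(R))$, which is (a).

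For part (b), I would apply the Matlis duality functor $D=\Hom_R(-,E_R(k))$ to the isomorphism in (a) and use the standard adjunction isomorphism
\[
 \Hom_R\!\bigl(\Tor^R_{c-i}(X,H^c_I(R)),E_R(k)\bigr)\cong \Ext^{c-i}_R\!\bigl(X,\Hom_R(H^c_I(R),E_R(k))\bigr),
\]
which holds because $E_R(k)$ is injective (so $\RHom_R(-,E_R(k))$ is just $\Hom_R(-,E_R(k))$ and commutes with the derived tensor product via the standard adjunction in the derived category). Together with (a), this gives (b) directly.

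The main obstacle is really only the justification of the projection-type formula $R\Gamma_I(X)\simeq R\Gamma_I(R)\lo_R X$; once one trusts that (via the flatness of the \v{C}ech complex or an appeal to \cite{S2} / \cite{hellus_schenzel}), both parts follow mechanically. A minor point worth checking is the functoriality claim: both sides of (a) are functors of $X$, and the isomorphism constructed from the \v{C}ech complex is visibly natural in $X$; the same is then automatic for (b) after applying $D$ and using the naturality of Hom-tensor adjunction.
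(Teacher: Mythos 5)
Your proposal is correct and takes essentially the same approach as the paper: both arguments rest on the \v{C}ech complex $\Cech$ being a bounded complex of flat $R$-modules that computes $H^i_I(X)$ for arbitrary $X$ and has $H^c_I(R)$ as its only nonvanishing cohomology, followed by Hom-tensor adjunction with the injective module $E_R(k)$ for part (b). The only difference is presentational: where you use the quasi-isomorphism $\Cech\simeq H^c_I(R)[-c]$ in the derived category, the paper extracts the same conclusion from the degeneration of the spectral sequence $\Tor_{-i}^R(H^j_I(R),X)\Rightarrow \Tor_{-i-j}^R(\Cech,X)$.
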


\begin{proof} First of all choose $\underline{x} = x_1,\ldots,x_r$ a system of elements of $R$
such that $\Rad \underline{x}R = \Rad I$. Then we consider the \v{C}ech complex
$\Check{C}_{\underline x}$. This is a bounded complex of flat $R$-modules with
 $H^i(\Check{C}_{\underline x}) = 0$
for all $i \not= c$ and $H^c(\Check{C}_{\underline x}) \cong H^c_I(R)$. Moreover, $H^i(X \otimes_R \Check{C}_{\underline x}) \cong H^i_I(X)$ (see e.g. \cite{S}). In order to compute the
cohomology $\Tor_i(\Check{C}_{\underline x}, X)$ there is the following spectral sequence
\[
E^{i,j}_2 = \Tor_{-i}^R(H^j_I(R),X) \Rightarrow E_{\infty}^{i+j} = \Tor_{-i-j}^R(\Check{C}_{\underline x},X):
\]
Since $I$ is a cohomologically complete intersection we get a degeneration to the following
isomorphisms
\[
\Tor_{c-i}^R(H^c_I(R),X) \cong \Tor_i^R(\Check{C}_{\underline x},X) \cong H^i_I(X)
\]
for all $i \in \mathbb{Z}$. This proves the isomorphisms of the statement in (a).

For the proof of (b) note that
\[
\Hom_R(\Tor_{c-i}^R(H^c_I(R),X), E_R(k)) \cong \Ext_R^{c-i}(X,\Hom_R(H^c_I(R),E_R(k)))
\]
as follows by adjunction since $E_R(k)$ is an injective $R$-module.
\end{proof}

The Matlis dual $\Hom_R(H^c_I(R),E_R(k))=D(H^c_I(R))$ plays a
central r\^ole in the above generalized duality. It allows to express the Matlis dual of
$H^i_I(X)$ in terms of an Ext module.

\begin{definition} Let $I \subset R$ denote a cohomologically complete intersection
with $c = \grade I$. Then we call $D_I(R) = \Hom_R(H^c_I(R), E_R(k))=D(H^c_I(R))$ the duality module
of $I$.
\end{definition}

In general the structure of $D_I(R)$ is difficult to determine. In the following we want to
discuss a few particular cases of cohomologically complete
intersections and their duality module. To this end let $K(R)$ denote the canonical
module of $R$, provided it exists.

\begin{corollary} \label{yy} Let $(R, \mathfrak{m})$ denote a local ring such that $\mathfrak{m}$
is a cohomologically complete intersection. There are natural isomorphisms
\[
H^{n-i}_{\mathfrak{m}}(M) \cong \Hom_R(\Ext_R^i(M,K(\hat{R}),E_R(k))), n = \dim R,
\]
for a finitely generated $R$-module $M$ and any $i \in \mathbb{Z}$. Note that $R$ is Cohen-Macaulay ring and $D_{\mathfrak{m}}(R)
\cong K(\hat{R})$.
\end{corollary}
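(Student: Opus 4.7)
The plan is to combine three ingredients: the Cohen-Macaulay consequence of $\mathfrak{m}$ being cohomologically complete intersection, the identification of $D_{\mathfrak{m}}(R)$ with $K(\hat R)$, and a direct application of Theorem \ref{xx}(b) followed by Matlis duality.

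First I would verify that $R$ is Cohen-Macaulay. By hypothesis $H^i_{\mathfrak{m}}(R)=0$ for all $i\neq c$, where $c=\grade\mathfrak{m}=\depth R$. Grothendieck's non-vanishing theorem guarantees $H^n_{\mathfrak{m}}(R)\neq 0$ with $n=\dim R$, so we must have $c=n$, i.e. $\depth R=\dim R$. Then $\hat R$ is Cohen-Macaulay as well and admits a canonical module $K(\hat R)$. Local duality (for the complete Cohen-Macaulay ring $\hat R$, applied to $H^n_{\mathfrak{m}}(R)=H^n_{\mathfrak{m}\hat R}(\hat R)$) identifies
\[
D_{\mathfrak{m}}(R)=\Hom_R(H^n_{\mathfrak{m}}(R),E_R(k))\cong K(\hat R).
\]

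Next I would specialize Theorem \ref{xx}(b) to the cohomologically complete intersection ideal $I=\mathfrak{m}$, with $c=n$ and $X=M$. This immediately yields, for every $i\in\mathbb Z$,
\[
\Ext_R^{n-i}(M,K(\hat R))\cong\Hom_R(H^i_{\mathfrak{m}}(M),E_R(k)).
\]
Reindexing $i\mapsto n-i$ rewrites this as
\[
\Ext_R^{i}(M,K(\hat R))\cong\Hom_R(H^{n-i}_{\mathfrak{m}}(M),E_R(k)).
\]

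Finally I would apply the Matlis duality functor $D(\cdot)$ to both sides. Since $M$ is finitely generated, the local cohomology module $H^{n-i}_{\mathfrak{m}}(M)$ is Artinian, so Matlis duality is reflexive on it and $D(D(H^{n-i}_{\mathfrak{m}}(M)))\cong H^{n-i}_{\mathfrak{m}}(M)$. This gives the desired
\[
H^{n-i}_{\mathfrak{m}}(M)\cong\Hom_R\bigl(\Ext_R^i(M,K(\hat R)),E_R(k)\bigr),
\]
which is the asserted natural isomorphism.

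There is no serious obstacle here; the step requiring the most care is simply the reindexing and the verification that Matlis duality can be inverted on the right-hand side, which is automatic from the Artinianness of $H^{n-i}_{\mathfrak{m}}(M)$ for finitely generated $M$. The conceptual content is entirely packaged into Theorem \ref{xx} and the identification $D_{\mathfrak{m}}(R)\cong K(\hat R)$ coming from classical local duality in the Cohen-Macaulay case.
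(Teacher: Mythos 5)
Your proposal is correct and follows essentially the same route as the paper: the Cohen--Macaulayness from the non-vanishing of $H^i_{\mathfrak m}(R)$ at $i=\depth R$ and $i=\dim R$, the identification $D_{\mathfrak m}(R)\cong K(\hat R)$ via local duality over $\hat R$, and then Theorem \ref{xx}(b) combined with Matlis duality, justified by the Artinianness of $H^i_{\mathfrak m}(M)$ for finitely generated $M$.
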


\begin{proof} In case $\mathfrak{m}$ is a cohomologically complete intersection, then
$\depth R = \grade \mathfrak{m} = \dim R$ since $H^c_{\mathfrak{m}}(R)$ is the onliest
non-vanishing local cohomology module. This follows by the non-vanishing of $H^i_{\mathfrak{m}}(R)$
for $i = \depth R$ and $i = \dim R$ (see e.g. \cite{S3}). Therefore $R$ is a Cohen-Macaulay ring.

Moreover, $\Hom_R(H^d_{\mathfrak{m}}(R),E_R(k)) \cong \Hom_{\hat{R}}(H^d_{\hat{\mathfrak{m}}}(\hat{R}), (E_{\hat{R}}(k)))$
since $E_R(k)$ is an Artinian $R$-module. Then $\hat{R}$ admits a canonical module and
$K(\hat{R}) \cong \Hom_{\hat{R}}(H^d_{\hat{\mathfrak{m}}}(\hat{R}), (E_{\hat{R}}(K)))$ (see also \cite{S3}
for more details.

It is known (and easy to see) that $H^i_{\mathfrak{m}}(M)$ is an Artinian $R$-module for any $i$ and
a finitely generated $R$-module $M$. Then the isomorphisms follow by Theorem \ref{xx} (b) by the aid
of Matlis duality.
\end{proof}

In the particular case of a Gorenstein ring it follows that $K(\hat{R}) \cong \hat{R}$. So Corollary
\ref{yy} provides the classically known Local Duality Theorem for a Gorenstein ring. In the following we shall consider
the case of an arbitrary cohomologically complete intersection $I$ in a Gorenstein ring.

\begin{corollary} \label{zz} Let $I \subset R$ denote a cohomologically complete intersection
in a Gorenstein ring $(R,\mathfrak{m})$. Then there is the isomorphism $D_I(R) \cong
\varprojlim H^d_{\mathfrak{m}}(R/I^{\alpha})$, where $d = \dim R/I$. That is, there are natural
isomorphisms
\[
\Hom_R(H^{c-i}_I(X), E_R(k)) \simeq \Ext^i_R(X, \varprojlim H^d_{\mathfrak{m}}(R/I^{\alpha}))
\]
for any $R$-module $X$ and all $i\in \mathbb{Z}$.
\end{corollary}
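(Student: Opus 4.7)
The plan is to recognize the duality module $D_I(R)=D(H^c_I(R))$ as an inverse limit obtained from the classical local duality applied term-by-term to $R/I^\alpha$, and then invoke Theorem \ref{xx}(b) to get the Ext-formula as an immediate corollary.

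First I would write the classical Local Duality Theorem for the Gorenstein ring $R$ applied to the finitely generated module $R/I^\alpha$: since $n=\dim R = c+d$, this yields a functorial isomorphism
\[
 H^d_{\mathfrak m}(R/I^\alpha)\cong D\bigl(\Ext^{n-d}_R(R/I^\alpha,R)\bigr)=D\bigl(\Ext^{c}_R(R/I^\alpha,R)\bigr)
\]
for every $\alpha\in\mathbb N$. These isomorphisms are natural in $\alpha$, so they pass to the inverse limit.

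Next I would commute $\varprojlim$ past the Matlis duality functor: since $D(\cdot)=\Hom_R(\cdot,E_R(k))$ turns direct limits into inverse limits,
\[
 \varprojlim_\alpha H^d_{\mathfrak m}(R/I^\alpha)\cong \varprojlim_\alpha D\bigl(\Ext^{c}_R(R/I^\alpha,R)\bigr)\cong D\Bigl(\varinjlim_\alpha \Ext^{c}_R(R/I^\alpha,R)\Bigr)=D(H^c_I(R)) = D_I(R).
\]
This establishes the first displayed isomorphism. The only point that needs a moment of care is that the transition maps in $\{\Ext^c_R(R/I^\alpha,R)\}_\alpha$ are precisely the ones whose colimit computes $H^c_I(R)$; this is standard, but it is the place where one has to be honest about which direct system one uses.

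For the second statement I would simply substitute. Theorem \ref{xx}(b), applied with $i$ replaced by $c-i$, gives
\[
 \Ext^i_R(X,D_I(R))=\Ext^{c-(c-i)}_R(X,\Hom_R(H^c_I(R),E_R(k)))\cong \Hom_R(H^{c-i}_I(X),E_R(k)),
\]
and combining this with the identification $D_I(R)\cong\varprojlim_\alpha H^d_{\mathfrak m}(R/I^\alpha)$ produces the desired natural isomorphism. The main obstacle, insofar as there is one, is the naturality and commutation of $\varprojlim$ with $D$ in the first step; once that bookkeeping is in place the rest is formal.
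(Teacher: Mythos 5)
Your proof is correct and follows essentially the same route as the paper: identify $H^c_I(R)$ with $\varinjlim\Ext^c_R(R/I^\alpha,R)$, use local duality over the Gorenstein ring $R$ (where $n-d=c$) together with the fact that $\Hom_R(-,E_R(k))$ converts the direct limit into an inverse limit to get $D_I(R)\cong\varprojlim H^d_{\mathfrak m}(R/I^\alpha)$, and then the Ext-formula is immediate from Theorem \ref{xx}(b). No gaps; the naturality point you flag is exactly the bookkeeping the paper also relies on.
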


\begin{proof} By the definition of local cohomology there are the following isomorphisms $H^i_I(R) \cong
\varinjlim \Ext^i_R(R/I^{\alpha}, R)$ for all $i \in \mathbb{Z}$. By the duality we get the isomorphisms
\[
\Hom_R(H^c_I(R), E_R(k)) \cong \Hom_R(\varinjlim \Ext_R^c(R/I^{\alpha},R), E_R(k)) \cong
\varprojlim H^d_{\mathfrak{m}}(R/I^{\alpha}).
\]
Note that the Hom-functor in the first place transforms a direct limit into an inverse limit.
\end{proof}

Note that $\varprojlim H^i_{\mathfrak{m}}(R/I^{\alpha})$ was studied in \cite{S2} under the name
formal local cohomology. See also \cite{S2} for more details. It is a little bit surprising that
the $d$-th formal local cohomology occurs as the duality module for the duality of cohomologically complete
intersections in a Gorenstein ring.

Now we consider the particular case of a one dimensional cohomologically complete intersection in a Gorenstein ring.

\begin{corollary} \label{11} Let $I \subset R$ denote a one dimensional cohomologically complete
intersection in a Gorenstein ring $R$ with $n = \dim R$.  Let $x \in \mathfrak{m}$ be a parameter of $R/I$ and
let $X$ denote an arbitrary $R$-module.
Then for all $i \in \mathbb{Z}$ there are natural isomorphisms
\[
\Hom_R(H^{c-i}_I(X),E_R(k)) \cong \Ext_R^i(X, D),
\]
where $D$ denotes the cokernel of the natural homomorphism $\hat{R}^I \to \hat{R_x}^I$.
\end{corollary}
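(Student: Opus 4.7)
The plan is to derive Corollary~\ref{11} as a direct application of Theorem~\ref{xx}(b) once the duality module $D_I(R)$ has been explicitly identified in this one-dimensional setting. After the index shift $i \mapsto c-i$, Theorem~\ref{xx}(b) immediately yields
\[
\Hom_R(H^{c-i}_I(X),E_R(k)) \cong \Ext_R^i(X, D_I(R))
\]
for every $R$-module $X$ and every $i \in \mathbb{Z}$, so the only remaining task is to exhibit a natural isomorphism between $D_I(R)$ and the cokernel $D$ of the map $\hat R^I \to \hat{R_x}^I$.

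To compute $D_I(R)$ in this setting, I would first invoke Corollary~\ref{zz}: since $d = \dim R/I = 1$, we obtain $D_I(R) \cong \varprojlim H^1_{\mathfrak m}(R/I^{\alpha})$. The one-dimensionality of $R/I$ also makes $x$ a parameter of $R/I^{\alpha}$ for every $\alpha$, so exactly as in the proof of the Corollary following Lemma~\ref{lemma_vorber} we have canonical isomorphisms $T(R/I^{\alpha}) \simeq R_x/I^{\alpha}R_x$ which are compatible with the transition maps. Passing to the inverse limit therefore gives $\varprojlim T(R/I^{\alpha}) \simeq \hat{R_x}^I$.

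Substituting this identification into the four-term exact sequence
\[
0 \to u(I\hat R^I) \to \hat R^I \to \varprojlim T(R/I^{\alpha}) \to \varprojlim H^1_{\mathfrak m}(R/I^{\alpha}) \to 0,
\]
obtained in the proof of Lemma~\ref{lemma_vorber} (before it is broken up into the two short exact sequences displayed in the statement), produces
\[
0 \to u(I\hat R^I) \to \hat R^I \to \hat{R_x}^I \to D_I(R) \to 0,
\]
from which $D_I(R)$ is visibly the cokernel of $\hat R^I \to \hat{R_x}^I$. No step poses a genuine obstacle; the only thing to verify carefully is that the map produced by Lemma~\ref{lemma_vorber} coincides with the natural map $\hat{R}^I \to \hat{R_x}^I$ of the statement, but this is transparent since at each finite level both maps are induced by the localization $R/I^\alpha \to R_x/I^\alpha R_x$. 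As a by-product one sees that the kernel $u(I\hat R^I)$ of $\hat R^I \to \hat{R_x}^I$ need not vanish, though of course it plays no role in the identification of the cokernel.
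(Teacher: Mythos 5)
Your argument is correct and is essentially the paper's own proof, which simply cites Corollary \ref{zz} together with the results of Section 1: you identify $D_I(R)$ with $\varprojlim H^1_{\mathfrak m}(R/I^{\alpha})$ via Corollary \ref{zz} and then realize this formal cohomology module as the cokernel of $\hat R^I \to \hat{R_x}^I$ using the four-term exact sequence from the proof of Lemma \ref{lemma_vorber} and the identification $\varprojlim T(R/I^{\alpha}) \simeq \hat{R_x}^I$. Your spelled-out version, including the compatibility check that the limit map is the natural localization map, is exactly the intended reasoning.
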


\begin{proof} The proof is an obvious consequence of Corollary \ref{zz} by the aid of the results from section 1.
\end{proof}

Another interpretation of the duality module $D$ in Corollary \ref{11} can be done as the
cokernel of the natural map $\hat{R}^I \to \oplus_{i=0}^s \widehat{R_{\mathfrak{p_i}}}$ as done in
Corollary 1.2.

\section{Cohomologically complete intersections: A generalization to modules}
\label{sect_gen_cpis_to_mod}
In this section let $I$ be an ideal of a local ring $(R,\mathfrak m)$. Let  $M$ denote  a finitely generated  $R$-module.
Let $E_R^{\cdot}(M)$ denote a minimal injective resolution of the $R$-module $M$. The cohomology of the
complex $\Gamma_I(E_R^{\cdot}(M))$ is by definition the local cohomology $H^i_I(M), i \in \mathbb{N}$. Suppose that $c = \grade(I,M)$. Then $\Gamma_I(E_R^{i}(M)) = 0$ for all $i < c$. Therefore
$H^c_I(M) = \Ker(\Gamma_I(E_R^{c}(M)) \to \Gamma_I(E_R^{c+1}(M))$ and there is an embedding
$H^c_I(M)[-c] \to \Gamma_I(E_R^{\cdot}(M))$ of complexes.

\begin{definition} \label{trc}
The cokernel of the embedding $H^c_I(M)[-c] \to \Gamma_I(E_R^{\cdot}(M))$ is defined by $C^{\cdot}_M(I)$, the
truncation complex of $M$ with respect to $I$. So there is a short exact sequence
\[
0 \to H^c_I(M)[-c] \to \Gamma_I(E_R^{\cdot}(M)) \to C^{\cdot}_M(I) \to 0
\]
of complexes of $R$-modules. In particular $H^i(C^{\cdot}_M(I)) = 0$ for all $i \leq c$ and
$H^i(C^{\cdot}_M(I))) \cong H^i_I(M)$ for all $i >c$.
\end{definition}

Note that the definition of the truncation complex was used in the case of $M= R$ a Gorenstein ring
in \cite{hellus_schenzel}. This construction is used in order to obtain certain natural homomorphisms.

\begin{lemma} \label{trcl}
Let $M$ denote a finitely generated $R$-module with $c = \grade(I,M)$. Then there are natural homomorphisms
\[
H^{i-c}_{\mathfrak{m}}(H^c_I(M)) \to H^i_{\mathfrak{m}}(M)
\]
for all $i \in \mathbb{N}$.  These are isomorphisms for all $i \in \mathbb{Z}$ if and only if
$H^i_{\mathfrak{m}}(C^{\cdot}_M(I)) = 0$ for all $i \in \mathbb{Z}$.
\end{lemma}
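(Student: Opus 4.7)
The plan is to apply the derived functor $R\Gamma_{\mathfrak{m}}$ to the defining short exact sequence of complexes from Definition \ref{trc} and to read the claim off the resulting long exact sequence of hyper-cohomology.

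First, the middle complex $\Gamma_I(E_R^{\cdot}(M))$ is a complex of injective $R$-modules, since each $\Gamma_I(E_R^i(M))$ is a direct summand of the injective $E_R^i(M)$. Because $I\subset\mathfrak{m}$, there is the pointwise identity $\Gamma_{\mathfrak{m}}\circ\Gamma_I=\Gamma_{\mathfrak{m}}$, and consequently
\[
R\Gamma_{\mathfrak{m}}\bigl(\Gamma_I(E_R^{\cdot}(M))\bigr)=\Gamma_{\mathfrak{m}}\bigl(\Gamma_I(E_R^{\cdot}(M))\bigr)=\Gamma_{\mathfrak{m}}(E_R^{\cdot}(M)),
\]
whose $i$-th cohomology is $H^i_{\mathfrak{m}}(M)$. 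The left-hand complex $H^c_I(M)[-c]$ is concentrated in a single degree, so trivially $H^i\bigl(R\Gamma_{\mathfrak{m}}(H^c_I(M)[-c])\bigr)=H^{i-c}_{\mathfrak{m}}(H^c_I(M))$.

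The distinguished triangle
\[
H^c_I(M)[-c]\to \Gamma_I(E_R^{\cdot}(M))\to C^{\cdot}_M(I)\to H^c_I(M)[-c+1]
\]
attached to the short exact sequence of complexes therefore induces, after $R\Gamma_{\mathfrak{m}}$ and passage to cohomology, a long exact sequence
\[
\cdots\to H^{i-c}_{\mathfrak{m}}(H^c_I(M))\to H^i_{\mathfrak{m}}(M)\to H^i_{\mathfrak{m}}(C^{\cdot}_M(I))\to H^{i-c+1}_{\mathfrak{m}}(H^c_I(M))\to\cdots,
\]
where $H^i_{\mathfrak{m}}(C^{\cdot}_M(I))$ is to be read as hyper-cohomology $H^i\bigl(R\Gamma_{\mathfrak{m}}(C^{\cdot}_M(I))\bigr)$. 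The maps $H^{i-c}_{\mathfrak{m}}(H^c_I(M))\to H^i_{\mathfrak{m}}(M)$ appearing in this sequence are the natural homomorphisms sought in the lemma, and exactness gives at once the equivalence: they are all isomorphisms if and only if $H^i_{\mathfrak{m}}(C^{\cdot}_M(I))=0$ for every $i\in\mathbb{Z}$.

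No serious obstacle is anticipated. The only mild subtlety is that $C^{\cdot}_M(I)$ is not a complex of injectives (its degree-$c$ term is the submodule $\Gamma_I(E_R^c(M))/H^c_I(M)$ of $\Gamma_I(E_R^{c+1}(M))$), so $H^i_{\mathfrak{m}}$ of it must be interpreted as hyper-cohomology; this is handled in the standard way by replacing $C^{\cdot}_M(I)$ by a quasi-isomorphic Cartan--Eilenberg resolution before applying $\Gamma_{\mathfrak{m}}$, after which the long exact sequence above is unchanged.
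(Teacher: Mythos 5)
Your argument is correct and is essentially the paper's own proof: both apply ${\rm R}\Gamma_{\mathfrak m}$ to the short exact sequence defining the truncation complex, use that $\Gamma_I(E_R^{\cdot}(M))$ consists of injectives together with $\Gamma_{\mathfrak m}\circ\Gamma_I=\Gamma_{\mathfrak m}$ to identify the middle term with $\Gamma_{\mathfrak m}(E_R^{\cdot}(M))$, and then read off the natural maps and the equivalence from the long exact (hyper)cohomology sequence. Your remark that $H^i_{\mathfrak m}(C^{\cdot}_M(I))$ must be understood as hypercohomology of ${\rm R}\Gamma_{\mathfrak m}(C^{\cdot}_M(I))$ matches the paper's implicit convention, so nothing is missing.
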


\begin{proof} Take the short exact sequence of the truncation complex (cf. \ref{trc}) and apply the derived functor $\Rgam (\cdot).$ In the derived category this provides a short exact sequence of complexes
\[
0 \to \Rgam (H^c_I(M))[-c] \to \Rgam (\gam(E^{\cdot}_R(M))) \to \Rgam (C^{\cdot}_M(I)) \to 0.
\]
Since $\gam(E^{\cdot}_R(M))$ is a complex of injective $R$-modules we might use $\Gamma_{\mathfrak m}(\Gamma_I(E^{\cdot}_R(M)))$ as a representative of $\Rgam (\gam(E^{\cdot}_R(M))).$ But now there is an equality for the composite of section functors $\Gamma_{\mathfrak m}(\Gamma_I(\cdot)) = \Gamma_{\mathfrak m}(\cdot).$ Therefore $\Gamma_{\mathfrak m}(E^{\cdot}_R(M))$ is a representative of $\Rgam (\gam(E^{\cdot}_R(M)))$ in the derived category.

First of all it provides the natural homomorphisms of the statement.
Then the the long exact cohomology sequence provides that these maps are isomorphisms if and only if
$H^i_{\mathfrak{m}}(C^{\cdot}_M(I)) = 0$ for all $i \in \mathbb{Z}$.
\end{proof}

\begin{definition} \label{ccim} The finitely generated $R$-module $M$ is called cohomologically complete intersection
with respect to $I$ in case there is an integer $c \in \mathbb{N}$ such that $H^i_I(M) = 0$ for all $i \not=c$.
Cleary $c = \grade (I,M)$.
\end{definition}

This notion extends those of a cohomologically complete intersection $I \subset R$ in a Gorenstein ring $R$
as it was studied in \cite{hellus_schenzel}.

It is our intention now to generalize part of \cite[Theorem 5.1]{hellus_schenzel} to the situation of a module $M$
and an ideal $I \subset R$ satisfying the requirements of Definition \ref{ccim}. See also \cite{M} for similar results.

\begin{theorem} \label{th_gvococciwrtM}
Let $(R,\mathfrak m)$ be a local ring, let $M$ be a finitely generated $R$-module,
$I$ an ideal of $R$. Let $c:=\grade(I,M)$. Then the following conditions are
equivalent:
\begin{itemize}
	\item[(i)] $H^i_I(M)=0$ for all $i \not= c$.
	\item[(ii)] The natural map
	\[
	H^i_{\mathfrak{p}R_{\mathfrak{p}}}(H^c_{IR_{\mathfrak{p}}}(M_{\mathfrak{p}}))
	\to H^{i+c}_{\mathfrak pR_{\mathfrak p}}(M_{\mathfrak p})
	\]
	is an isomorphism for all $\mathfrak{p} \in V(I) \cap \Supp M$ and all $i \in \mathbb{Z}$.
\end{itemize}
\end{theorem}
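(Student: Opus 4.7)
My plan is to recast both conditions in terms of a single derived-category object, namely the cone
\[ T := \mathrm{cone}\bigl(H^c_I(M)[-c] \longrightarrow \mathrm{R}\Gamma_I(M)\bigr), \]
whose cohomology satisfies $H^j(T) = H^j_I(M)$ for $j > c$ and $H^j(T) = 0$ for $j \le c$. For $\mathfrak p \in V(I) \cap \Supp M$ the inclusion $IR_{\mathfrak p} \subseteq \mathfrak p R_{\mathfrak p}$ yields the composition identity $\mathrm{R}\Gamma_{\mathfrak p R_{\mathfrak p}} \circ \mathrm{R}\Gamma_{IR_{\mathfrak p}} \simeq \mathrm{R}\Gamma_{\mathfrak p R_{\mathfrak p}}$; applying $\mathrm{R}\Gamma_{\mathfrak p R_{\mathfrak p}}$ to $H^c_{IR_{\mathfrak p}}(M_{\mathfrak p})[-c] \to \mathrm{R}\Gamma_{IR_{\mathfrak p}}(M_{\mathfrak p})$ and passing to $H^n$ recovers exactly the map in (ii) with $i = n - c$. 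Thus (ii) at $\mathfrak p$ is equivalent to the single vanishing $\mathrm{R}\Gamma_{\mathfrak p R_{\mathfrak p}}(T_{\mathfrak p}) = 0$; this is the essential content of Lemma \ref{trcl} phrased in derived language, and working with the global cone $T$ (instead of a locally defined truncation complex) sidesteps the possible strict inequality $\grade(IR_{\mathfrak p}, M_{\mathfrak p}) > c$.

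The direction (i) $\Rightarrow$ (ii) is then immediate: under (i) the defining morphism $H^c_I(M)[-c] \to \mathrm{R}\Gamma_I(M)$ is itself a quasi-isomorphism, so $T = 0$ in the derived category and hence $\mathrm{R}\Gamma_{\mathfrak p R_{\mathfrak p}}(T_{\mathfrak p}) = 0$ for every $\mathfrak p$.

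For (ii) $\Rightarrow$ (i) I argue by contradiction, assuming the set $S := \bigcup_{j > c} \Supp H^j_I(M)$ is non-empty and picking $\mathfrak p$ minimal in $S$. By the minimality, each $H^j_I(M)_{\mathfrak p}$ with $j > c$ has support contained in $\{\mathfrak p R_{\mathfrak p}\}$ and is therefore $\mathfrak p R_{\mathfrak p}$-torsion, so $H^p_{\mathfrak p R_{\mathfrak p}}(H^j_I(M)_{\mathfrak p}) = 0$ for all $p > 0$ and all $j > c$. In the hypercohomology spectral sequence
\[ E_2^{p,q} = H^p_{\mathfrak p R_{\mathfrak p}}(H^q(T_{\mathfrak p})) \Longrightarrow H^{p+q}_{\mathfrak p R_{\mathfrak p}}(T_{\mathfrak p}) \]
the abutment vanishes by the reformulation of (ii), the rows $q \le c$ vanish by construction of $T$, and the rows $q > c$ are concentrated in the single column $p = 0$ by the torsion observation. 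All higher differentials therefore vanish, the sequence degenerates at $E_2$, and $H^q_I(M)_{\mathfrak p} = E_2^{0,q} = E_\infty^{0,q} = 0$ for every $q > c$, contradicting $\mathfrak p \in S$. The main obstacle lies in the reformulation in the first paragraph: once the grade mismatch between $c$ and $\grade(IR_{\mathfrak p}, M_{\mathfrak p})$ is absorbed into the cone $T$, the minimal-prime trick collapses the spectral sequence cleanly and the rest is automatic.
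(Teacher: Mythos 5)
Your argument is correct and is essentially the paper's own proof in light disguise: your cone $T$ is exactly the truncation complex $C^{\cdot}_M(I)$ of Definition \ref{trc}, your reformulation of (ii) as $\mathrm{R}\Gamma_{\mathfrak p R_{\mathfrak p}}(T_{\mathfrak p})=0$ is Lemma \ref{trcl} localized, and your minimal-prime/spectral-sequence step is the paper's induction on $\dim V(I)\cap \Supp M$ repackaged, both resting on the fact that a module supported in $\{\mathfrak p R_{\mathfrak p}\}$ is torsion and hence $\Gamma_{\mathfrak p R_{\mathfrak p}}$-acyclic. The one genuine refinement is your explicit treatment of how the natural map in (ii) is defined when $\grade(IR_{\mathfrak p},M_{\mathfrak p})>c$, a point the paper passes over silently.
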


\begin{proof} We begin with the proof of the implication $(i) \Longrightarrow (ii)$. By the
assumption it follows easily that $c = \grade (IR_{\mathfrak{p}}, M_{\mathfrak{p}})$ for all
$\mathfrak{p} \in V(I) \cap \Supp M$. That is we might reduce the proof to the case of the
maximal ideal. By the assumption in (i) it follows (see Definition \ref{trc}) that $C^{\cdot}_M(I)$
is an exact bounded complex. Therefore $H^i_{\mathfrak{m}}(C^{\cdot}_M(I)) = 0$ for all $i \in \mathbb{Z}$.
So the claim follows by virtue of Lemma \ref{trcl}.

For the proof of $(ii) \Longrightarrow (i)$ we proceed by an induction on $\dim V(I)\cap \Supp M =: t$.  In the
case of $t = 0$, i.e. $V(I)\cap \Supp M = \{\mathfrak{m}\}$, it follows that $\Rgam (C^{\cdot}_M(I)) \cong
C^{\cdot}_M(I)$. Then the claim is true by virtue of Lemma \ref{trcl}. Now suppose that $t > 0$ and by
induction hypothesis the statement holds for all smaller dimensions. Then it follows that
$\Supp H^i_I(M) \subseteq \{\mathfrak{m}\}$ for all $i \not= c$. By the definition of $C^{\cdot}_M(I)$ we get
that $\Supp H^i(C^{\cdot}_M(I)) \subseteq \{\mathfrak{m}\}$. Therefore it follows that $H^i_{\mathfrak{m}} (C^{\cdot}_M(I))
\cong H^i(C^{\cdot}_M(I))$ for all $i \in \mathbb{Z}$. By the assumption in (i) for $\mathfrak{p} = \mathfrak{m}$
it implies (see Lemma \ref{trcl})  that
\[
H^i_{\mathfrak{m}}(C^{\cdot}_M(I)) \cong H^i(C^{\cdot}_M(I)) = H^i_I(M) = 0
\]
for all $i \not= c.$ This completes the proof.
\end{proof}

We remark that Theorem \ref{th_gvococciwrtM} works without the hypothesis ''$R$ is Gorenstein''. In the paper
\cite{hellus_schenzel} the authors considered only the case of a Gorenstein ring.

\section{A note on direct and inverse limits}

In the proof of \cite[Lemma 1.2(a)]{hellus_schenzel} it is claimed that $\Ext$ of a direct limit in the first variable is the projective limit of the corresponding $\Ext$'s. In general, this is not true: E.~g. it is well-known and not very difficult to see that $\Ext^1_{\mathbb Z}(\mathbb Q, \mathbb Z)$ is non-zero (it is actually uncountable), while $\mathbb Q$ can be written as a direct limit of copies of $\mathbb Z$'s and each copy of $\Ext^1_{\mathbb Z}(\mathbb Z, \mathbb Z)$ is of course zero. We explain how this problem can be overcome (literally all results from \cite{hellus_schenzel} are valid -- apart from lemma 1.2 (a) ).

The general result is the following:

\begin{theorem} \label{limit} (\cite[Lemma 2.6]{S4}) Let $\{M_{\alpha}\}$ be a direct system of $R$-modules. Let $N$ denote an arbitrary $R$-module.
Then there is a short exact sequence
\[
0 \to \varprojlim{}^1 \Ext^{i-1}_R(M_{\alpha}, N) \to \Ext_R^i(\varinjlim M_{\alpha}, N) \to \varprojlim \Ext_R^i(M_{\alpha},N)
\to 0
\]
for all $i \in \mathbb{Z}$. In particular, $\Hom_R(\varinjlim M_{\alpha}, N) \cong \varprojlim \Hom_R(M_{\alpha}, N)$.
\end{theorem}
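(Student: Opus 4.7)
The plan is to reduce to a countable direct system and then invoke the classical telescope construction, which converts the problem into recognising $\varprojlim$ and $\varprojlim{}^1$ as kernel and cokernel of a single map on a product. By passing to a cofinal subsystem one may assume the index set is $\mathbb{N}$ with structure maps $f_n\colon M_n\to M_{n+1}$. The key input is then the telescope short exact sequence
\[
0 \to \bigoplus_{n \in \mathbb{N}} M_n \xrightarrow{\operatorname{id} - s} \bigoplus_{n \in \mathbb{N}} M_n \to \varinjlim_n M_n \to 0,
\]
where $s$ sends the $n$-th summand into the $(n+1)$-st via $f_n$; exactness is a direct unravelling of the equivalence relation defining the direct limit.

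Next I would apply $\Ext_R^{\bullet}(-,N)$ to this short exact sequence. Using the canonical isomorphism $\Ext_R^i(\bigoplus_n M_n, N) \cong \prod_n \Ext_R^i(M_n,N)$, the induced long exact sequence takes the shape
\[
\cdots \to \prod_n \Ext_R^{i-1}(M_n,N) \xrightarrow{\operatorname{id}-s^{*}} \prod_n \Ext_R^{i-1}(M_n,N) \to \Ext_R^i(\varinjlim_n M_n, N) \to \prod_n \Ext_R^i(M_n,N) \xrightarrow{\operatorname{id}-s^{*}} \cdots,
\]
where $s^{*}$ is the shift $(\phi_n)_n \mapsto (f_n^{*}\phi_{n+1})_n$ on the inverse system $\{\Ext_R^i(M_n,N)\}$ whose transition maps are the contravariant $f_n^{*}\colon \Ext_R^i(M_{n+1},N)\to \Ext_R^i(M_n,N)$. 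By the standard description of $\varprojlim$ and $\varprojlim{}^1$ for an $\mathbb{N}$-indexed inverse system of modules one has $\Ker(\operatorname{id}-s^{*})\cong \varprojlim \Ext_R^i(M_n,N)$ and $\Coker(\operatorname{id}-s^{*})\cong \varprojlim{}^1 \Ext_R^i(M_n,N)$. Extracting a short exact sequence from the long one at the appropriate spot yields the announced sequence, and the case $i=0$ (where $\Ext^{-1}=0$) degenerates to the familiar isomorphism $\Hom_R(\varinjlim M_{\alpha}, N)\cong\varprojlim \Hom_R(M_{\alpha},N)$.

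The main obstacle is precisely the cofinality reduction at the beginning: for directed systems whose cofinality is uncountable the higher derived limits $\varprojlim{}^j$ with $j\ge 2$ can contribute and the telescope short exact sequence in the above form is no longer available, so one would instead have to work with a whole derived inverse limit and its associated spectral sequence. In the settings in which Theorem \ref{limit} is actually used in \cite{hellus_schenzel} and \cite{S4} the relevant direct systems are indexed by powers of an ideal, hence naturally $\mathbb{N}$-indexed, and the telescope argument applies directly so this subtlety does not arise.
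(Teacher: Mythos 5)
The paper gives no proof of this statement at all---it is quoted from \cite[Lemma 2.6]{S4}---and your telescope (Milnor sequence) argument is exactly the standard way such a lemma is proved, so your route is essentially the same as the cited source's; the individual steps (exactness of $0\to\oplus_n M_n\to\oplus_n M_n\to\varinjlim M_n\to 0$, the identification $\Ext^i_R(\oplus_n M_n,N)\cong\prod_n\Ext^i_R(M_n,N)$, and recognizing $\Ker$ and $\Coker$ of $\mathrm{id}-s^{\ast}$ as $\varprojlim$ and $\varprojlim^1$ of the induced tower) are all correct. The one point to watch is the caveat you flag yourself: an arbitrary directed system need not have a countable cofinal subsystem, so the opening reduction is not legitimate in general, and indeed for uncountable cofinality the displayed short exact sequence (with only $\varprojlim^1$ appearing) is not available---one would need the higher derived limits and a spectral sequence. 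Hence the theorem as stated for an arbitrary $\{M_\alpha\}$ should be read with a cofinally $\mathbb{N}$-indexed hypothesis; since every application in this paper and in \cite{S4} concerns systems indexed by the powers $I^{\alpha}$, your argument covers all the uses made of the statement.
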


The previous Lemma \ref{limit} gives the corrected version of \cite[Lemma 1.2 (a)]{hellus_schenzel}. In the following we shall
explain how to derive the other results of \cite[Lemma 1.2]{hellus_schenzel}.

%
%

\begin{lemma}
\label{rem_ald}Let $(R,\mathfrak m,k)$ be an $n$-dimensional local Gorenstein ring. For each $R$-module $X$ there are canonical isomorphisms
\[ \Ext^{n-i}_R(X,\hat R)\simeq \Hom_R(H^i_\mathfrak m(X),E)\]
for all $i\in \mathbb Z$. Here $\hat R$ denotes the completion of $R$.
\end{lemma}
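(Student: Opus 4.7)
The plan is to prove the lemma directly in the derived category using the \v{C}ech complex, thus avoiding a reduction to finitely generated submodules (and with it the $\varprojlim^{1}$-correction that Theorem \ref{limit} would otherwise force upon us). Fix elements $\underline x = x_1,\ldots,x_n$ of $\mathfrak m$ with $\Rad(\underline x R) = \mathfrak m$ and let $\Cech$ denote the associated \v{C}ech complex. Recall that $\Cech$ is a bounded complex of flat $R$-modules and that $H^j(X \otimes_R \Cech) \cong H^j_{\mathfrak m}(X)$ for every $R$-module $X$.

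First I would analyse the complex $\Hom_R(\Cech, E)$. Since $R$ is Gorenstein of dimension $n$, the cohomology of $\Cech$ is concentrated in degree $n$, namely $H^n(\Cech) = H^n_{\mathfrak m}(R) \cong E$. Because $E$ is injective, $\Hom_R(\cdot, E)$ is exact, so the cohomology of $\Hom_R(\Cech, E)$ is concentrated in degree $-n$, where it equals $\Hom_R(E,E) \cong \hat R$. Moreover, $\Hom_R(\cdot, E)$ sends flat modules to injective ones, so $\Hom_R(\Cech, E)$ is a bounded complex of injective $R$-modules that is quasi-isomorphic to $\hat R[n]$.

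Next I would apply the standard Hom-tensor adjunction to obtain a natural isomorphism of complexes
\[
\Hom_R(X \otimes_R \Cech, E) \cong \Hom_R(X, \Hom_R(\Cech, E)),
\]
functorial in $X$. The left-hand side, again because $E$ is injective, has $j$-th cohomology $\Hom_R(H^{-j}_{\mathfrak m}(X), E)$. The right-hand side is $\Hom_R(X, \cdot)$ applied to a bounded complex of injectives representing $\hat R[n]$, and so its $j$-th cohomology is $\Ext^{\,n+j}_R(X, \hat R)$. Equating these and substituting $j = -i$ produces the desired isomorphism $\Hom_R(H^i_{\mathfrak m}(X), E) \cong \Ext^{\,n-i}_R(X, \hat R)$.

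The one point that requires care is the claim that $\Hom_R(X, \Hom_R(\Cech, E))$ genuinely computes $\RHom_R(X, \hat R)[n]$. This is where it matters that $\Hom_R(\Cech, E)$ is a \emph{bounded} complex of injective modules: such a complex is $K$-injective and may therefore be used to compute the derived $\Hom$ into it. Everything else is bookkeeping, and the argument imposes no finite-generation or completeness hypothesis on $X$; functoriality in $X$ is automatic from the adjunction.
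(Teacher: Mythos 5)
Your argument is correct, and it is a genuinely self-contained route rather than the one the paper takes. The paper disposes of this lemma in one line by specializing Corollary \ref{zz} (equivalently Theorem \ref{xx}(b)) to $I=\mathfrak m$, using that $\mathfrak m$ is a cohomologically complete intersection in a Gorenstein ring and that $D_{\mathfrak m}(R)=\Hom_R(H^n_{\mathfrak m}(R),E)\cong\Hom_R(E,E)\cong\hat R$; the underlying proof of Theorem \ref{xx} runs through the \v{C}ech complex, a hyperhomology spectral sequence $\Tor_{-i}^R(H^j_I(R),X)\Rightarrow H^{i+j}$ that degenerates, and then Hom--tensor adjunction against $E$. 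You use the same basic ingredients (the \v{C}ech complex $\check{C}_{\underline x}$, flatness of its terms, injectivity of $E$, adjunction) but bypass the Tor-computation and the spectral sequence entirely by observing that $\Hom_R(\check{C}_{\underline x},E)$ is a bounded complex of injectives with single cohomology $\Hom_R(E,E)\cong\hat R$ in degree $-n$, i.e.\ essentially an injective resolution of $\hat R[n]$, so that $\Hom_R(X\otimes_R\check{C}_{\underline x},E)\cong\Hom_R(X,\Hom_R(\check{C}_{\underline x},E))$ computes both sides at once; you also correctly flag the K-injectivity point that justifies using this complex to compute $\RHom_R(X,\hat R)$. What your approach buys is a direct, elementary proof of the generalized local duality statement for arbitrary (not necessarily finitely generated) $X$ without invoking the cohomologically-complete-intersection framework; what the paper's approach buys is economy and the conceptual point that this lemma is literally the case $I=\mathfrak m$ of the duality of Section 3. (As in the paper, the ``canonical'' in the statement should be read as functorial in $X$ after fixing the non-canonical identification $H^n_{\mathfrak m}(R)\cong E$ for the Gorenstein ring.)
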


\begin{proof} The proof follows immediately from Corollary \ref{zz} or from \cite[Theorem 6.4.1]{hellushabil}. We put $I:=\mathfrak m$,. Recall that  $D_I(R)=\hat R$.
\end{proof}

Having established this general version of Local Duality, it is easy to produce the  statement of \cite[lemma 1.2 (b)]
{hellus_schenzel}.

\begin{corollary} \label{4}
\label{coro_peter}Let $I$ be a proper ideal of height $c$ in a $n$-dimensional local Gorenstein ring $(R,\mathfrak m)$. There are canonical isomorphisms
\[ \Ext^{n-i}_R(H^j_I(R),\hat R)\simeq \Hom_R(H^i_\mathfrak m(H^j_I(R)),E)\simeq \varprojlim \Ext^{n-i}_R(\Ext^j_R(R/I^\alpha,R),\hat R)\]
for all $i,j\in \mathbb Z$.
\end{corollary}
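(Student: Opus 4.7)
The plan is to derive both isomorphisms by combining Lemma~\ref{rem_ald} with the interaction between local cohomology and direct limits, applied to the standard description of $H^j_I(R)$ as a direct limit of Ext's. The first isomorphism, $\Ext^{n-i}_R(H^j_I(R),\hat R)\simeq \Hom_R(H^i_\mathfrak m(H^j_I(R)),E)$, is immediate: Lemma~\ref{rem_ald} is stated for an arbitrary $R$-module $X$, so I would simply apply it with $X:=H^j_I(R)$.

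For the second isomorphism I would start from $H^j_I(R)\cong \varinjlim_\alpha \Ext^j_R(R/I^\alpha,R)$. Since the section functor $\Gamma_\mathfrak m$ commutes with direct limits and direct limits are exact in the category of $R$-modules, each right derived functor $H^i_\mathfrak m$ also commutes with direct limits, giving a canonical isomorphism
\[
H^i_\mathfrak m(H^j_I(R))\cong \varinjlim_\alpha H^i_\mathfrak m(\Ext^j_R(R/I^\alpha,R)).
\]
Applying Matlis duality $\Hom_R(-,E)$ turns this direct limit into an inverse limit, so
\[
\Hom_R(H^i_\mathfrak m(H^j_I(R)),E)\cong \varprojlim_\alpha \Hom_R(H^i_\mathfrak m(\Ext^j_R(R/I^\alpha,R)),E).
\]
For each $\alpha$ the module $\Ext^j_R(R/I^\alpha,R)$ is finitely generated over $R$, so Lemma~\ref{rem_ald} applies termwise to identify each factor with $\Ext^{n-i}_R(\Ext^j_R(R/I^\alpha,R),\hat R)$. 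Passing to the limit gives the second isomorphism, and concatenation with the first isomorphism yields the full chain of the statement.

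The main point that requires care is the \emph{naturality} of the isomorphism in Lemma~\ref{rem_ald} in the variable $X$: we need it to be compatible with the transition maps $\Ext^j_R(R/I^\alpha,R)\to \Ext^j_R(R/I^{\alpha+1},R)$ so that the termwise identifications assemble into the desired isomorphism of inverse limits. This follows from the functorial construction of local duality but is the single place where one must be attentive.

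As an alternative to going through $H^i_\mathfrak m$, one could apply Theorem~\ref{limit} directly to $\Ext^{n-i}_R(\varinjlim_\alpha \Ext^j_R(R/I^\alpha,R),\hat R)$ and then verify that the resulting $\varprojlim{}^1$-term vanishes. By Lemma~\ref{rem_ald} this term equals $\varprojlim{}^1\Hom_R(H^{i}_\mathfrak m(\Ext^j_R(R/I^\alpha,R)),E)$, i.e.\ the $\varprojlim{}^1$ of an inverse system obtained by Matlis-dualizing a direct system $\{N_\alpha\}$. Such a $\varprojlim{}^1$ is always zero: applying the exact functor $\Hom_R(-,E)$ to the short exact sequence $0\to \bigoplus N_\alpha \xrightarrow{1-\mathrm{shift}} \bigoplus N_\alpha \to \varinjlim N_\alpha \to 0$ produces a \emph{surjection} $\prod \Hom_R(N_\alpha,E)\twoheadrightarrow \prod \Hom_R(N_\alpha,E)$, which is precisely the statement that $\varprojlim{}^1=0$. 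Either route gives the corollary.
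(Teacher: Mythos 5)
Your proof is correct and follows essentially the same route as the paper: the first isomorphism is Lemma \ref{rem_ald} applied to $X=H^j_I(R)$, and the second comes from applying Lemma \ref{rem_ald} termwise to the modules $\Ext^j_R(R/I^\alpha,R)$, using that $H^i_\mathfrak m$ commutes with direct limits and that $\Hom_R(-,E)$ converts the direct limit into an inverse limit (the $i=0$ case of Theorem \ref{limit}), with the same attention to compatibility with the transition maps. Your alternative argument via Theorem \ref{limit} together with the vanishing of $\varprojlim{}^1$ for the Matlis dual of a direct system is also sound and essentially recovers Remark \ref{5}.
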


\begin{proof}
The first of the isomorphisms is a consequence of Lemma \ref{rem_ald} applied to $H^j_I(R)$. Lemma \ref{rem_ald} applied to $\Ext^j_R(R/I^\alpha,R)$ provides a family of isomorphisms
\[ \Hom_R(H^i_\mathfrak m(\Ext^j_R(R/I^\alpha,R)),E)\simeq \Ext^{n-i}_R(\Ext^j_R(R/I^\alpha,R),\hat R), \text{for all }\alpha \in \mathbb N,\]
 which are compatible with the inverse systems induced by the natural surjections. So, it induces an isomorphism
\[ \varprojlim \Hom_R(H^i_\mathfrak m(\Ext^j_R(R/I^\alpha,R)),E)\simeq \varprojlim \Ext^{n-i}_R(\Ext^j_R(R/I^\alpha,R),\hat R),\]
for all $i$ and $j$. Since the inverse limit commutes with the direct limit under $\Hom$ in the first place (see Theorem \ref{limit}) it induces an isomorphism
\[ \varprojlim \Hom_R(H^i_\mathfrak m(\Ext^j_R(R/I^\alpha,R)),E)\simeq \Hom_R(\varinjlim H^i_\mathfrak m(\Ext^j_R(R/I^\alpha,R)),E).\]
This finally completes the proof since $H^j_I(R)\cong \varinjlim \Ext^j_R(R/I^\alpha,R)$ and because local cohomology commutes with direct limits.
\end{proof}

With these results in mind the proof of \cite[Lemma 1.2 (c)]{hellus_schenzel} follows the same line of arguments as
in the original paper. In the proof of \cite[Corollary 2.9]{hellus_schenzel} there is a reference to \cite[Lemma 1.2(b)]{hellus_schenzel}: However 2.9 can be easily deduced from the minimal injective resolution $0\to H^c_I(R)\to \Gamma_I(E^c)\to \Gamma_I(E^{c+1})\to \ldots $ (where $0\to R\to E^\bullet $ is a minimal injective resolution of $R$) of $H^c_I(R)$; note that we know what indecomposable injective modules occur in the complex $E^\bullet $ since $R$ is Gorenstein.

In the proof of (iii)$\iff $(iv) of \cite[Theorem 3.1]{hellus_schenzel} there is a reference to \cite[lemma 1.2(b)]{hellus_schenzel}: But this equivalence (iii)$\iff $(iv) follows from Lemma \ref{rem_ald}.

\begin{remark} \label{5} Combining the statements in Theorem \ref{limit} with those of Corollary \ref{4} it follows
that
\[
\varprojlim{}^1 \Ext_R^i(\Ext_R^j(R/I^{\alpha}, R),\hat{R}) = 0
\]
for all $i,j \in \mathbb{Z}$.
\end{remark}


\begin{thebibliography}{1111}

\bibitem{B} {\sc M. Brodmann:}  Asymptotic stability of ${\rm Ass}(M/I\sp{n}M)$, Proc. Amer. Math. Soc. {\bf 74} (1979), no.~1, 16--18.

\bibitem{CS} {\sc F. W. Call, R. Y. Sharp:} A short proof of the local Lichtenbaum-Hartshorne theorem on the vanishing of local cohomology. 
Bull. Lond. Math. Soc. {\bf 18} (1986), 261-264 . 

\bibitem{EX} {\sc E. Enochs,  J. Xu:}  On invariants dual to the Bass numbers, Proc. Amer. Math. Soc. {\bf 125} (1997), no.~4, 951--960.

\bibitem{E} {\sc E. Enochs:}  Flat covers and flat cotorsion modules, Proc. Amer. Math. Soc. {\bf 92} (1984), no.~2, 179--184.

\bibitem{hellushabil} {\sc M. Hellus:} Local Cohomology and Matlis Duality, Ha\-bi\-li\-ta\-tions\-schrift, Leipzig,
2006, available from http://www.mathematik.uni-regensburg.de/Hellus/HabilitationsschriftOhneDeckblatt.pdf

\bibitem{hellus_schenzel} {\sc M. Hellus, P. Schenzel:} On cohomologically complete intersections, J. Algebra {\bf 320} (2008), no.~10, 3733--3748.

\bibitem{hellus_stueckrad} {\sc M. Hellus,  J. St\"uckrad:}  Matlis duals of top local cohomology modules, Proc. Amer. Math. Soc. {\bf 136} (2008), no.~2, 489--498.

\bibitem{I} {\sc T. Ishikawa:} On injective modules and flat modules, J. Math. Soc. Japan {\bf 17} (1965), 291--296.

\bibitem{M} {\sc W. Mahmood:} On endomorphism rings of local cohomology modules, in preparation.

\bibitem{matsumura} {\sc H. Matsumura:}  {\it Commutative ring theory}, translated from the Japanese by M. Reid, Cambridge Studies in Advanced Mathematics, 8, Cambridge Univ. Press, Cambridge, 1986.

\bibitem{S3} {\sc P. Schenzel:}  On The Use of Local Cohomology in Algebra and Geometry. In: Six Lectures in
Commutative Algebra, Proceed. Summer School on Commutative Algebra at Centre de
Recerca Matem\`{a}tica, (Ed.: J. Elias, J. M. Giral, R. M. Mir\'{o}-Roig, S. Zarzuela), Progr. Math. 166, pp. 241-292, Birkh\"auser, 1998.


\bibitem{S} {\sc P. Schenzel:}  Proregular sequences, local cohomology, and completion, Math. Scand. {\bf 92} (2003), no.~2, 161--180.

\bibitem{S2} {\sc P. Schenzel:}  On formal local cohomology and connectedness, J. Algebra {\bf 315} (2007), no.~2, 894--923.

\bibitem{S4} {\sc  P. Schenzel:}  On the structure of the endomorphism ring of a certain local cohomology module, J. Algebra
{\bf 344} (2011), 229-245.

\bibitem{ZZ} {\sc M. R. Zargar,  H. Zakeri:}  On injective and Gorenstein injective dimensions of local cohomology
modules, \texttt{arXiv:1204.2394}.

\end{thebibliography}
\end{document}